\newcommand\nthalias[1]{\AddToHook{env/#1/begin}{\crefalias{lemma}{#1}}}
\crefname{section}{Section}{Sections}
\crefname{subsection}{\S}{\S\S}
\crefname{subsubsection}{\S}{\S\S}
\theoremstyle{plain}
\newtheorem{lemma}{Lemma}[section]
\newtheorem{proposition}[lemma]{Proposition}
\newtheorem{corollary}[lemma]{Corollary}
\newtheorem{theorem}[lemma]{Theorem}
\theoremstyle{plain}
\theoremstyle{plain}
\newtheorem{definition}[lemma]{Definition}
\newtheorem{example}[lemma]{Example}
\newtheorem{remark}[lemma]{Remark}
\crefname{definition}{definition}{definitions}
\crefname{ex}{example}{examples}
\crefname{exs}{example}{examples}
\crefname{remark}{remark}{remarks}
\crefname{remarks}{remark}{remarks}
\crefname{convention}{convention}{conventions}
\crefname{notation}{notation}{notations}
\crefname{table}{table}{tables}
\crefname{lemma}{lemma}{lemmas}
\crefname{proposition}{proposition}{propositions}
\crefname{propositionN}{proposition}{propositions}
\crefname{corollary}{corollary}{corollaries}
\crefname{corollaryN}{corollary}{corollaries}
\crefname{theorem}{theorem}{theorems}
\crefname{theoremN}{theorem}{theorems}
\crefname{enumi}{}{}
\crefname{assumption}{assumption}{Assumptions}
\crefname{construction}{construction}{Constructions}
\crefname{question}{question}{Questions}
\crefname{equation}{}{}
\numberwithin{equation}{section}
\theoremstyle{nonumberplain}
\newtheorem{proof}{Proof}
\newcommand\pf[1]{\newtheorem{#1}{Proof of \Cref{#1}}}
\newcommand\bC{{\mathbb C}}
\newcommand\bG{{\mathbb G}}
\newcommand\bR{{\mathbb R}}
\newcommand\bS{{\mathbb S}}
\newcommand\bZ{{\mathbb Z}}
\newcommand\cD{{\mathcal D}}
\newcommand\cG{{\mathcal G}}
\newcommand\cL{{\mathcal L}}
\newcommand\cM{{\mathcal M}}
\newcommand\cO{{\mathcal O}}
\newcommand\cS{{\mathcal S}}
\newcommand\cV{{\mathcal V}}
\newcommand\cX{{\mathcal X}}
\newcommand\cY{{\mathcal Y}}
\DeclareMathOperator{\Ad}{Ad}
\DeclareMathOperator{\id}{id}
\DeclareMathOperator{\End}{\mathrm{End}}
\DeclareMathOperator{\Hom}{\mathrm{Hom}}
\DeclareMathOperator{\spn}{\mathrm{span}}
\DeclareMathOperator{\ev}{\mathrm{ev}}
\DeclareMathOperator{\tr}{tr}
\DeclareMathOperator{\db}{\mathrm{db}}
\DeclareMathOperator{\D}{D}
\DeclareMathOperator{\G}{G}
\DeclareMathOperator{\U}{U}
\DeclareMathOperator{\Qut}{Qut}
\DeclareMathOperator{\Tr}{Tr}
\newcommand{\cat}[1]{\textsc{#1}}
\newcommand{\qedhere}{\mbox{}\hfill\ensuremath{\blacksquare}}
\newcommand{\xrightarrowdbl}[2][]{%
  \xrightarrow[#1]{#2}\mathrel{\mkern-14mu}\rightarrow
}
\title{Quantum-rigid random quantum graphs}
\author{Alexandru Chirvasitu, Piotr M. So{\l}tan and Mateusz Wasilewski}
\begin{document}

\date{}

\newcommand{\Addresses}{{
  \bigskip
  \footnotesize

  \textsc{Department of Mathematics, University at Buffalo}
  \par\nopagebreak
  \textsc{Buffalo, NY 14260-2900, USA}  
  \par\nopagebreak
  \textit{E-mail address}: \texttt{achirvas@buffalo.edu}
  
\medskip  
  
   \textsc{Department of Mathematical Methods in Physics, Faculty of Physics, University of Warsaw}
  \par\nopagebreak
  \textsc{ul.~Pasteura 5, 02-093 Warsaw, Poland}
  \par\nopagebreak
  \textit{E-mail address}: \texttt{piotr.soltan@fuw.edu.pl}  
  
\medskip  
  
   \textsc{Institute of Mathematics, Polish Academy of Sciences}
  \par\nopagebreak
  \textsc{00-656 Warsaw, Poland}  
  \par\nopagebreak
  \textit{E-mail address}: \texttt{mwasilewski@impan.pl}


}}

\maketitle

\begin{abstract}
  A quantum graph $\mathcal{G}$ housed by a matrix algebra $M_n$ can be encoded as an operator system $\mathcal S=\mathcal{S}_{\mathcal{G}}\le M_n$. There are two sensible notions of quantum automorphism group for any such: $\mathrm{Qut}(\mathcal G)$, capturing the quantum symmetries of the adjacency matrix $A:M_n\to M_n$ attached to $\mathcal{G}$, and $\mathrm{Qut}(\mathcal S\le M_n)$, the quantum group acting universally on $M_n$ so as to preserve its $C^*$ structure, standard trace, and subspace $\mathcal{S}\le M_n$. The two quantum groups coincide classically, but diverge in general. We nevertheless show that both are generically trivial in the sense that they are so for $\mathcal{S}\le M_n$ ranging over a non-empty Zariski-open set under all reasonable dimensional constraints on $\dim \mathcal{S}$ and $n$.

  This extends analogous prior results by the first and third authors to the effect that classical symmetry groups of still-quantum graphs are generically trivial, and offers a fully quantum counterpart to the familiar probabilistic almost-rigidity of finite graphs. An auxiliary result sheds some light on the relationship between the two notions of quantum automorphism group, identifying the universal preserver of the quantum adjacency matrix of $\mathcal{G}$ with the quantum automorphism group not of $\mathcal{S}\le M_n$, but rather of the complex conjugate $\overline{\mathcal{S}}\le M_n$. 
\end{abstract}

\noindent {\em Key words:
  Choi matrix;
  Grassmannian;
  Zariski topology;
  comodule;
  compact quantum group;
  operator system;
  quantum automorphism group;
  quantum graph
}

\vspace{.5cm}

\noindent{MSC 2020: 20G42; 46L67; 16T15; 18M05; 46L89; 47L25; 52C25; 05C25

  
}

\tableofcontents

\section*{Introduction}

Quantum graphs have received much recent attention in the literature; the later is by now substantial enough to make it difficult to survey with any pretense at exhaustion in a short recollection such as this, but we mention \cite{zbMATH07202497,MR4140642,zbMATH07856619,zbMATH07632578,MR4481115} as representative sources (with substantive references of their own) along with a number of papers \cite{MR4139106,MR4507619,MR3015725,mrv_qtm-compos,mrv_qtm-qgriso,MR3447998,MR4707042} concerned with quantum graphs' relevance to emergent areas of mathematical physics such as quantum computation and information theory.

By way of a quick recollection:

\begin{definition}\label{def:q.graphs}
  Let $B$ be a finite dimensional $C^{\ast}$-algebra equipped with a faithful tracial functional $\tau$ such that $mm^{\ast} = \id$, where $m: B \otimes B \to B$ is the multiplication map and $m^{\ast}: B \to B\otimes B$ is its adjoint with respect to the inner product  induced by $\tau$, i.e. $\Braket{x\mid y} := \tau(x^{\ast}y)$. We call a completely positive map $A: B\to B$ a \emph{quantum adjacency matrix} if it satisfies $m(A\otimes A)m^{\ast} = A$. The triple $\mathcal{G} := (B, \tau, A)$ will be called a \emph{quantum graph}. 
\end{definition}
\begin{remark}
  Such a functional $\tau$ is unique for $B \simeq \bigoplus_{a=1}^{k} M_{n_{a}}$, namely $\tau(x) = \bigoplus_{a=1}^{k} n_{a} \Tr_{n_{a}}(x_{a})$ for the usual trace $\Tr_n$ on $M_n$. In the non-tracial case such a positive functional is not unique.
\end{remark}

An equivalent definition \cite[Definition 2.6]{zbMATH06008057} is in terms of $B'$-bimodules inside
\begin{equation*}
  \cL(\mathsf{H})
  :=
  \left\{\text{bounded operators on $\mathsf{H}$}\right\}
  ,
\end{equation*}
having represented $B$ faithfully on the Hilbert space $\mathsf{H}$. In the special case $B = M_n$ and $\mathsf{H} = \bC^n$ (which we are concerned with virtually exclusively) these are just subspaces of $M_n$. \emph{Operator systems}, i.e. \cite[pre Proposition 2.1]{pls-bk} unital self-adjoint subspaces correspond to reflexive, undirected quantum graphs. This condition phrased in terms of the quantum adjacency matrix means that $m(A\otimes \id)m^{\ast} = \id$ and $A$ is self-adjoint. An operator system associated with the quantum graph $\cG$ will be denoted by $\cS$.

\begin{definition}\label{def:q.adj.deg.matrix}
  Let $\mathcal{G}$ be a quantum graph. Then $D:=A\mathds{1}$ is called the \emph{degree matrix}. If $\cS$ is the corresponding operator system and $(X_i)_{i}$ is its orthonormal basis with respect to the \emph{normalized} trace $\tr=\frac{\Tr}n$, then the quantum adjacency matrix is given by $Ax = \sum_{i} X_{i} x X_{i}^{\ast}$, so $D = \sum_{i} X_{i} X_{i}^{\ast}$.

  Many quantum graphs in this paper will arise as linear spans $\braket{\cX}$ of operator tuples $\cX$.
\end{definition}

Our primary goal is to establish the generic quantum rigidity of a quantum graph housed by $M_n$, ``fully quantizing''
\begin{itemize}[wide]
\item the analogous \cite[Theorem 3.19]{zbMATH07502493}, to the effect that ``most'' operator systems $\cS\le M_n$, under only the sensible dimensional constraints, have trivial (\emph{classical}) automorphism group;

\item as well as the well-known fully-classical counterpart (\cite[Theorem 3.1]{ksv_randomgr} or \cite[Theorem 9.13]{boll_rnd-grph_2001} for $p=1/2$ modeled on the original \cite[Theorem 2]{MR156334} etc.): selecting each edge of an $n$-vertex graph with probability $0<p<1$, the probability that the graph have trivial symmetry group approaches $1$ as $n\to\infty$.
\end{itemize}

\emph{Quantum rigidity} refers to a quantum graph's having trivial \emph{quantum automorphism group}, which notion requires some unpacking of its own. In first instance, all such will be \emph{compact quantum groups}; the background on the latter needed here is not extensive, and we refer the reader to \cite[Chapter 1]{NeTu13} or \cite[Chapters 3 and 5]{tim} say, with other sources occasionally making an appearance where needed.  

Compact quantum groups that will feature fairly prominently include
\begin{itemize}[wide]
\item the \emph{quantum automorphism groups} $\Qut(B,\tau)$ of finite-dimensional $C^*$-algebras equipped with (typically tracial) states $\tau$ (e.g. \cite[Theorem 6.1(2)]{zbMATH01316946} or \cite[Deﬁnition 1.10]{zbMATH06617237});

\item the \emph{free unitary group} $\U^+(n)$, whose underlying \emph{CQG algebra} \cite[Definition 2.2]{dk_cqg} $H:=\cO\left(\U^+(n)\right)$ is freely generated (as a $*$-algebra) by the entries of a unitary $u:=(u_{ij})_{i,j}\in M_n(H)$ with $\overline{u}:=\left(u^*_{ij}\right)_{i,j}$ again unitary.

  The $C^*$-completion of $\cO\left(\U^+(n)\right)$ is the Hopf $*$-algebra denoted by $A_u(n)$ in \cite[Theorem 1.2]{zbMATH00763766} and $A_u(I_n)$ in \cite[Remark 1.5(2)]{zbMATH00908215}. Note that the $\U^+(n)$-action on $\bC^n$ affords a conjugation action on $(M_n,\tau:=\text{normalized trace})$; we take this for granted repeatedly in the sequel. 
\end{itemize}

We will begin with the more common definition of the quantum automorphism group of a quantum graph, introduced in \cite{MR3849575} and then extended to the non-tracial case in  \cite{bcehpsw_bigal}.
\begin{definition}\label{def:qut.adj}
  Let $\mathcal{G}:= (B, \tau, A)$ be a quantum graph. Let $\Qut(B,\tau)$ be the quantum automorphism group of $(B, \tau)$ and let $\alpha: B \to B \otimes \cO(\Qut(B,\tau))$ be the associated action. We define $\cO(\Qut(\mathcal{G}))$ to be quotient of $\cO(\Qut(B,\tau))$ by the relation $\alpha\circ A = (A\otimes \id)\circ \alpha$ and call the resulting compact quantum group the quantum automorphism group of the quantum graph $\mathcal{G}$.
\end{definition}

Given the description of quantum graphs in terms of operator systems, the following seems natural.

\begin{definition}\label{def:qut.opsys}
  Let $\cS \le B$ be an operator system. We call the quantum subgroup of $\Qut(B, \tau)$ preserving $\cS$ the quantum automorphism group the quantum automorphism group of $\cS \le B$, denoted by $\Qut(\cS \le B)$.
\end{definition}

If $B=M_n$ then $\cS$ can be viewed as a quantum graph on $M_n$, hence we have \emph{two} candidates for the quantum automorphism group. The connection between different notions of the quantum automorphism groups of quantum graphs has been studied by Daws (see \cite[Theorem 9.18]{MR4706978}) and the relationship is not straightforward (see \Cref{ex:cls.quts} below for relatively simple cases of disagreement). Nevertheless, we will show that for our purposes these objects are similar enough: one is generically trivial if and only if the other one is (\Cref{cor:2qgps.gen.triv}).

We refer to $(d+1)$-dimensional operator systems as \emph{operator $d$-systems} (for the unit will not play much of a role in assessing the size of the space of symmetries). Here and throughout the paper a quantum-graph/operator-$d$-system property holds \emph{generically} if it does over a non-empty open set in the real Zariski topology on the Grassmannian $\bG_{sa}(d,M_n)$ of $d$-dimensional self-adjoint complex subspaces of $M_n$. 

In order to state \Cref{th:qut.adj.smlr} we need \emph{spatial} versions of the two quantum groups in \Cref{def:qut.adj,def:qut.opsys}: quantum groups operating on the quantum graph in the appropriate structure-preserving manner, with an action induced by one on $\bC^n$.

\begin{definition}\label{def:spat.qut}
  Let $\G\xrightarrow{\varphi} \U^+(n)$ be a quantum-group morphism inducing
  \begin{equation*}
    \bC^n
    \xrightarrow[\quad\text{$\G$-action}\quad]{\quad}
    \bC^n\otimes \cO(\G),
  \end{equation*}
  hence also a conjugation action
  \begin{equation*}
    \left(
      M_n\cong \bC^n\otimes \left(\bC^n\right)^*\cong \End(\bC^n)
    \right)
    \xrightarrow{\quad\Ad_{\varphi}\quad}
    M_n\otimes \cO(\G)
  \end{equation*}
  with associated morphism
  \begin{equation*}
    \begin{tikzpicture}[>=stealth,auto,baseline=(current  bounding  box.center)]
      \path[anchor=base] 
      (0,0) node (l) {$\G$}
      +(2,.5) node (u) {$\U^+(n)$}
      +(4,0) node (r) {$\Qut(M_n,\tau)$}
      ;
      \draw[->] (l) to[bend left=6] node[pos=.5,auto] {$\scriptstyle \varphi$} (u);
      \draw[->] (u) to[bend left=6] node[pos=.5,auto] {$\scriptstyle $} (r);
      \draw[->] (l) to[bend right=6] node[pos=.5,auto,swap] {$\scriptstyle \varphi_{\Ad}$} (r);
    \end{tikzpicture}
  \end{equation*}
  For quantum graphs $\cG$ on $M_n$ or operator systems $\cS\le M_n$ we use the collective notation $\Qut(\bullet)$ for the objects introduced in \Cref{def:qut.adj,def:qut.opsys}. 
  
  The \emph{$\varphi$- (or $\bG$-)relative quantum automorphism group} of a quantum graph $\cG$ on $M_n$ or an operator system $\cS$ is the object dual to the \emph{pushout} \cite[Definition 2.5.1 and post Proposition 2.5.3]{brcx_hndbk-1}
  \begin{equation*}
    \begin{tikzpicture}[>=stealth,auto,baseline=(current  bounding  box.center)]
      \path[anchor=base] 
      (0,0) node (l) {$\cO\left(\Qut(M_n,\tau)\right)$}
      +(3,.5) node (u) {$\cO\left(\Qut(\bullet)\right)$}
      +(3,-.5) node (d) {$\cO(\G)$}
      +(6,0) node (r) {$\cO\left(\Qut_{\varphi}(\bullet)\right)$}
      ;
      \draw[->>] (l) to[bend left=6] node[pos=.5,auto] {$\scriptstyle $} (u);
      \draw[->] (u) to[bend left=6] node[pos=.5,auto] {$\scriptstyle $} (r);
      \draw[->] (l) to[bend right=6] node[pos=.5,auto,swap] {$\scriptstyle \varphi_{\Ad}$} (d);
      \draw[->>] (d) to[bend right=6] node[pos=.5,auto,swap] {$\scriptstyle $} (r);
    \end{tikzpicture}
  \end{equation*}
  in the category of CQG algebras. 
\end{definition}

\begin{theorem}\label{th:qut.adj.smlr}
  Let $1\leqslant d \leqslant n^2-2$ and $\G\xrightarrow{\varphi} \U^+(n)$ a quantum-group morphism. For an operator $d$-system $\cS\le M_n$ with quantum graph $\mathcal{G}$ we have
  \begin{equation*}
    \Qut_{\varphi}(\mathcal{G})= \Qut_{\varphi}\left(\overline{\cS} \le M_n\right),
  \end{equation*}
  $\overline{\cS}$ denoting the complex conjugate of $\cS$.
\end{theorem}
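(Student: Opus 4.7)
The plan is to deduce the equality from the paper's intermediate identification of the universal preserver of the quantum adjacency matrix $A$ with $\Qut(\mathcal{S}^t \le M_n)$ (alluded to in the abstract), combined with the elementary identity $\overline{\mathcal{S}} = \mathcal{S}^t$ valid for self-adjoint $\mathcal{S}$.

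The identity $\overline{\mathcal{S}} = \mathcal{S}^t$ is straightforward: for any $X \in \mathcal{S}$ self-adjointness of $\mathcal{S}$ gives $X^* = \overline{X}^t \in \mathcal{S}$, so $\overline{X} = (X^*)^t \in \mathcal{S}^t$, with the reverse inclusion following symmetrically. Hence $\Qut(\overline{\mathcal{S}} \le M_n) = \Qut(\mathcal{S}^t \le M_n)$, and it suffices to prove $\Qut(\mathcal{G}) = \Qut(\mathcal{S}^t \le M_n)$ generically.

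Both $\Qut(\mathcal{G})$ and $\Qut(\overline{\mathcal{S}} \le M_n)$ are realized as quotients of $\mathcal{O}(\Qut(M_n, \tau))$ by explicit ideals---the first imposing $\alpha \circ A = (A \otimes \id) \circ \alpha$, the second imposing $\alpha(\overline{\mathcal{S}}) \subseteq \overline{\mathcal{S}} \otimes H$. To compare them I would translate the first relation via a Choi-type tensor $\theta_A \in \mathcal{S} \otimes \overline{\mathcal{S}}$ associated with $A$---essentially $\sum_i X_i \otimes \overline{X_i}$ for $(X_i)$ an orthonormal basis of $\mathcal{S}$---so that $\alpha \circ A = (A \otimes \id) \circ \alpha$ is equivalent to $\theta_A$ being fixed under the induced coaction on $M_n \otimes M_n$. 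Under a non-degeneracy condition on $\theta_A$, namely that its partial contractions recover all of $\mathcal{S}$ and all of $\overline{\mathcal{S}}$, this fixedness is in turn equivalent to the coaction preserving $\overline{\mathcal{S}}$, yielding the coincidence of the two ideals and therefore of the two quantum groups.

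Finally, I would verify that the non-degeneracy condition cuts out a non-empty Zariski-open subset of $\mathbb{G}_{sa}(d, M_n)$: openness is formal, since the condition is the non-vanishing of a polynomial in the Plücker coordinates, while non-emptiness---the main obstacle of the proof---is handled by exhibiting a single $\mathcal{S}$ in the allowed range $1 \le d \le n^2 - 2$ whose Choi-type tensor has full rank in both slots (for instance, $\mathcal{S}$ spanned by a sufficiently generic self-adjoint tuple). Zariski-openness then upgrades this one-point verification to the generic conclusion.
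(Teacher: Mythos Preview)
Your opening move is circular: the ``intermediate result'' you invoke from the abstract, identifying the preserver of $A$ with $\Qut(\cS^t\le M_n)$, \emph{is} the theorem being proved (since, as you correctly note, $\cS^t=\overline{\cS}$ for self-adjoint $\cS$). Setting that aside, the substantive argument has two genuine gaps.

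First, the tensor $\theta_A=\sum_i X_i\otimes\overline{X_i}\in M_n\otimes\overline{M_n}$ is the wrong object. Under the canonical identification $M_n\otimes\overline{M_n}\cong\End(M_n)$ it corresponds to the orthogonal projection $P_{\cS}$ onto $\cS$, not to $A$; hence $\theta_A$ being fixed is equivalent to $\cS$ (not $\overline{\cS}$, and not $A$) being invariant. \Cref{ex:cls.quts} exhibits this failure explicitly: the homogeneous $Y$ makes $\cS=\spn\{Y,Y^*\}$ graded (so $\theta_A$ is fixed, each $Y\otimes\overline{Y}$ having degree $\gamma\gamma^{-1}=e$), yet $x\mapsto YxY^*$ is not graded, so $A$ is not equivariant. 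Your proposed non-degeneracy condition cannot rescue this, since $\theta_A$ \emph{always} has full rank in both slots when the $X_i$ are linearly independent; it is vacuous and cannot be the source of genericity.

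Second, the correct object is the Choi matrix $C_A\in\End\bigl((\bC^n)^*\otimes\bC^n\bigr)$, and for the correspondence $A\leftrightarrow C_A$ to be equivariant one needs the quantum group to act on $\bC^n$, not merely on $M_n$ (cf.\ the paragraph preceding \Cref{prop:equiv.choi}). This is exactly what the paper supplies: \Cref{prop:degreefixed} shows the degree matrix $D$ is fixed by both candidate quantum groups, a generic $D$ has simple spectrum, and \Cref{pr:leave.diag.inv} then forces both actions through $\D^+(n)$ (\Cref{cor:gen.diag}). Only with $\bC^n$ thus promoted to a representation can \Cref{prop:equiv.choi} and \Cref{le:trnsp.inv} be invoked to identify the range of $C_A$ with $\overline{\cS}$ and conclude both inclusions. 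The genericity in the theorem enters through the simple spectrum of $D$, not through any rank condition on a Choi-type tensor.
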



That in hand, we can state the announced generic-rigidity result.

\begin{theorem}\label{th:dn23}
  For positive integers $n\ge 3$ and $2\le d\le n^2-3$ the quantum automorphism group $\Qut(\cS\le M_n)$ is trivial for generic operator $d$-system $\cS\le M_n$.
\end{theorem}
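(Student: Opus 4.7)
The plan is to combine Theorem \ref{th:qut.adj.smlr} with an openness argument and a single explicit rigidity example. To begin, note that complex conjugation $\cS \mapsto \overline{\cS}$ is a real-algebraic involution of the Grassmannian $\bG_{sa}(d, M_n)$, hence preserves Zariski-dense open loci. Combining this with Theorem \ref{th:qut.adj.smlr}, generic triviality of $\Qut(\cS \le M_n)$ is equivalent to generic triviality of $\Qut(\mathcal{G})$, where $\mathcal{G}$ is the quantum graph built from $\cS$. We may therefore work with whichever formulation is more convenient.

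Given this reduction, I would prove the theorem by establishing two facts. First, that the locus $\{\cS : \Qut(\cS \le M_n) = 1\} \subseteq \bG_{sa}(d, M_n)$ is Zariski-open. Realising $\Qut(\cS \le M_n)$ as the Hopf $*$-algebra quotient of $\cO(\Qut(M_n, \tau))$ by the relations encoding $\alpha(\cS) \subseteq \cS \otimes \cO(\Qut(M_n, \tau))$, these relations depend polynomially on a basis of $\cS$. Triviality is detectable by the collapse of matrix coefficients of irreducible corepresentations into scalars, equivalently by the minimality of certain intertwiner-space dimensions; the latter are upper semicontinuous in $\cS$, so the non-triviality locus is Zariski-closed. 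Second, that for each $n \ge 3$ and $2 \le d \le n^2 - 3$ there exists at least one $(d+1)$-dimensional operator system $\cS_0 \le M_n$ with $\Qut(\cS_0 \le M_n)$ trivial. A natural construction starts from the classically-rigid operator systems of \cite{zbMATH07502493} and perturbs them so as to ensure that no residual quantum symmetries survive.

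The main obstacle is the second step. Classical rigidity does not immediately imply quantum rigidity: there are familiar examples of graphs with trivial classical automorphism group but non-trivial quantum automorphism group, and the same phenomenon should arise in the quantum-graph setting. Eliminating the extra \emph{quantum permutations} likely requires a direct Tannakian computation on the coaction $\alpha: M_n \to M_n \otimes \cO(\Qut(M_n, \tau))$, verifying that its intertwiner spaces collapse to those of the trivial quantum group. The boundary exclusions $d \in \{1, n^2 - 2\}$ and $n \le 2$ suggest that in those regimes such an $\cS_0$ genuinely fails to exist (for $d = 1$, for example, $\cS$ is the span of the identity and a single self-adjoint, and quantum rigidity is typically out of reach), so the argument must use the allowed range essentially to control the relevant intertwiner dimensions.
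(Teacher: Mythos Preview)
Your high-level strategy---openness of the triviality locus plus exhibition of a single rigid example for each $(n,d)$---is in fact the skeleton of the paper's argument as well. But the proposal as written has a genuine gap: you explicitly identify the second step (producing a rigid $\cS_0$) as ``the main obstacle'' and then do not carry it out. Saying one should ``perturb classically-rigid systems so as to ensure no residual quantum symmetries survive'' is a hope, not a construction; as you yourself note, classical rigidity does not imply quantum rigidity, so there is real content here that your proposal leaves entirely unaddressed.

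The paper supplies exactly that missing content, and the mechanism is rather different from a direct Tannakian computation on intertwiners. The key structural input is \Cref{prop:degreefixed} and \Cref{cor:gen.diag}: the degree matrix $D$ is fixed by $\Qut(\cS\le M_n)$, and generically $D$ has simple spectrum, forcing the action to be \emph{essentially diagonal}---i.e.\ to factor through a $\Gamma$-grading on $\bC^n$ for some quotient $\Gamma$ of the free group $F_n$. This reduces the problem to showing that generic real subspaces of $\bC^p$ admit no nontrivial diagonal quantum symmetries (\Cref{le:real.inv}), which combined with \Cref{pr:no.non.scl} yields $\tensor*[_n]{\cat{qRig}}{_{2\le d\le (n-1)(n-2)}}$. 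An inductive step (\Cref{pr:ind.stp}) then propagates rigidity from $M_n$ to $M_{n+1}$, and together with the reflexive-complement symmetry $d\leftrightarrow n^2-1-d$ this reduces the entire theorem to three small cases $(n,d)\in\{(3,3),(3,4),(4,7)\}$, each handled by an explicit construction.

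A secondary issue: your openness sketch conflates semicontinuity directions. Triviality of $\G_W$ means its intertwiner spaces on $V$ are \emph{maximal} (everything commutes), not minimal; equivalently $\dim\cO(\G_W)$ is minimal. The openness result you need is precisely \Cref{thm:gen.triv}, which the paper imports from a companion article and is not a one-line semicontinuity observation.
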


\subsection*{Acknowledgements}
MW was supported by the  National Science Center, Poland (NCN) grant no. 2021/43/D/ST1/01446. PS was partially supported by NCN (National Science Centre, Poland) grant no.~2022/47/B/ST1/00582. MW would like to thank Adam Skalski for helpful discussions.





\section{Two versions of the quantum automorphism group}\label{se:2qgps}

We begin with an analogue of \cite[Proposition 1.13]{zbMATH07502493} saying that all symmetries preserve the degree matrix.

\begin{proposition}\label{prop:degreefixed}
Let $\mathcal{G}:= (M_n, \tau, A)$ be a quantum graph and let $\cS \le M_n$ be the corresponding operator system. Let $D:= A\mathds{1}$ be the degree matrix of $\mathcal{G}$. Then $\alpha(D) = D \otimes \mathds{1}$ if $\alpha$ denotes either the action of $\Qut(\mathcal{G})$ or $\Qut(\cS \le M_n)$ on $M_n$.
\end{proposition}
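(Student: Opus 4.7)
The plan is to handle the two cases in turn; the first is essentially definitional. The defining relation of $\Qut(\mathcal{G})$ from \Cref{def:qut.adj} is precisely $\alpha\circ A=(A\otimes \id)\circ\alpha$, and applying both sides to $\mathds{1}\in M_n$, coupled with the unitality $\alpha(\mathds{1})=\mathds{1}\otimes 1$ of any quantum-group action, yields
\begin{equation*}
  \alpha(D)=\alpha(A\mathds{1})=(A\otimes\id)\bigl(\alpha(\mathds{1})\bigr)=(A\otimes\id)(\mathds{1}\otimes 1)=D\otimes 1
\end{equation*}
immediately.

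For the $\Qut(\cS\le M_n)$ case I would pick an orthonormal basis $(X_i)_{i=1}^d$ of $\cS$ for the inner product $\tr(\cdot^*\cdot)$, so that $D=\sum_i X_i X_i^*$ by \Cref{def:q.adj.deg.matrix}. Preservation of $\cS$ by $\alpha$ affords a matrix $u=(u_{ij})\in M_d\bigl(\cO(\Qut(\cS\le M_n))\bigr)$ with $\alpha(X_j)=\sum_i X_i\otimes u_{ij}$, and the $*$-property of $\alpha$ forces $\alpha(X_j^*)=\sum_i X_i^*\otimes u_{ij}^*$. A routine expansion then produces
\begin{equation*}
  \alpha(D)=\sum_{j,k}X_jX_k^*\otimes\sum_i u_{ji}u_{ki}^*,
\end{equation*}
so the desired equality $\alpha(D)=D\otimes 1$ reduces to the relation $\sum_i u_{ji}u_{ki}^*=\delta_{jk}$, i.e.\ $uu^*=I$.

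The one substantive step is therefore the unitarity of $u$, and this I expect to be the main (though still mild) obstacle. It is however standard compact-quantum-group fare: $\alpha$ factors through $\Qut(M_n,\tau)$ and hence preserves $\tau$, so it restricts on $\cS$ to a corepresentation of the CQG algebra $\cO(\Qut(\cS\le M_n))$ on the finite-dimensional Hilbert space $\bigl(\cS,\tr(\cdot^*\cdot)\bigr)$ that preserves the inner product. Every such inner-product-preserving finite-dimensional corepresentation of a CQG algebra is unitary — equivalently, one checks directly from $(\tau\otimes\id)\bigl(\alpha(X_i)^*\alpha(X_j)\bigr)=\delta_{ij}$ that $u^*u=I$, and the CQG antipode turns this one-sided inverse into a two-sided one — and $u$ is its matrix in the basis $(X_i)$. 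With $uu^*=I$ in hand the displayed formula collapses to $D\otimes 1$, finishing the proof.
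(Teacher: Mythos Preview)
Your proof is correct and follows essentially the same line as the paper's. The first case is handled identically; for the second, both arguments hinge on the unitarity of the subcomodule $\cS\le M_n$, with the paper invoking this categorically (the coevaluation $\sum_j e_j\otimes e_j^*\in\cS\otimes\cS^*$ is fixed, and multiplication is a comodule morphism) while you unpack the same fact in matrix coefficients, deriving $u^*u=I$ from $\tau$-preservation and promoting it to $uu^*=I$ via the antipode.
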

\begin{proof}
  For $\Qut(\mathcal{G})$ this is clear because $\alpha \circ A = (A\otimes \id)\circ \alpha$, $D= A\mathds{1}$ and $\alpha$ is unital.
 
For $\Qut(\cS \le M_n)$, recall that $D= \sum_{j} e_{j} e_{j}^{\ast}$, where $(e_{j})_{j}$ is an orthonormal basis of $\cS$. Observe that the Hilbert space $\left(M_n,\ \Braket{-\mid -}\right)$ is a \emph{unitary} \cite[(1.8)]{dk_cqg} $H$-comodule for the CQG algebra $H:=\cO(\Qut(M_n,\tau))$, so the same goes for $H'$-subcomodules $\cV\le M_n$ for CQG quotient algebras $H\xrightarrowdbl{} H'$, in particular for $\cS \le M_n$. We have 
  \begin{equation*}
    \cV\ni e_j
    \xmapsto{\quad}
    \sum_i e_i \otimes u_{ij}
    \in \cV\otimes H'
    \quad
    \xRightarrow{\quad}
    \quad
    \cV^*\ni e^*_j
    \xmapsto{\quad}
    \sum_i e^*_i \otimes u^*_{ij}
    \in \cV^*\otimes H',
  \end{equation*}
  identifying \cite[post Definition 1.3.8]{NeTu13} $\cV^*\le M_n$, also $H'$-invariant, with the \emph{dual} comodule in the monoidal-categorical sense of \cite[Definition 2.10.1]{egno}. It follows that
  \begin{equation*}
    \sum_{j}e_j\otimes e^*_j
    \in
    \cV\otimes \cV^*
    \le
    \cM_n^{\otimes 2},
  \end{equation*}
  identifiable with $1\in \End(\cV)\cong \cV\otimes \cV^*$, is $H'$-fixed, and hence so is its image through multiplication (an $H'$-comodule morphism). 
\end{proof}

Similarly to \cite[Theorem 3.24]{zbMATH07502493} one can now conclude that generically the quantum automorphism group of a quantum graph is abelian. Some terms and notation will be useful in discussing this.

\begin{definition}\label{def:qdiag}
  \begin{enumerate}[(1),wide]
  \item\label{item:def:qdiag:diag.gp} The quantum group $\D^+(n)\le \U^+(n)$ is the \emph{Pontryagin dual} \cite[\S 3.3]{tim} $\widehat{F_n}$ of the free group
    \begin{equation*}
      F_n\cong \Braket{s_i,\ 1\le i\le n}
      ,\quad
      s_i\text{ free generators},
    \end{equation*}
    coacting on $\bC^n$ in the obvious fashion: the standard basis $(e_i)_{i=1}^n\subset \bC^n$ is homogeneous for an $F_n$-grading assigning $e_i$ degree $s_i$.   
    
  \item\label{item:def:qdiag:diag.act} $\D^+(n)$ acts on $\bC^n$ as well as $(M_n,\tau)$, by conjugation. We will say that a quantum group $\G$ mapping to either $\U^+(n)$ or $\Qut(M_n,\tau)$ acts
    \begin{itemize}[wide]
    \item \emph{diagonally} on $\bC^n$ or $M_n$ if $\G\to \U^+(n)$ factors through $\D^+(n)$, and similarly for the action on $M_n$;

    \item \emph{essentially diagonally} if the same holds up to a change of orthonormal basis for $\bC^n$. 
    \end{itemize}
  \end{enumerate}  
\end{definition}

\begin{proposition}\label{pr:leave.diag.inv}
  If $X\in M_n$ is such that $X^*X$ has simple spectrum then the action of $\Qut(\bC X\subseteq M_n)$ on $M_n$ is essentially diagonal in the sense of \Cref{def:qdiag}\Cref{item:def:qdiag:diag.act}.
\end{proposition}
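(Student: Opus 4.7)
The plan is to unwind what the preservation of the line $\bC X$ by the coaction $\alpha\colon M_n\to M_n\otimes H$ (with $H:=\cO(\Qut(\bC X\subseteq M_n))$) actually entails. Since $\bC X$ is $1$-dimensional, the invariance reads $\alpha(X)=X\otimes a$ for a unique $a\in H$. Applying the $\ast$-structure and exploiting that $\alpha$ is a unital $\ast$-homomorphism, one obtains
\[
\alpha(X^*X)=X^*X\otimes a^*a,\qquad \alpha(XX^*)=XX^*\otimes aa^*.
\]
Applying $(\tr\otimes\id)$ and using $\tau$-preservation forces $a^*a=aa^*=1$, so $a$ is unitary in $H$ and $X^*X$ is $\alpha$-fixed.

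Next, since $X^*X$ has simple spectrum, each of its rank-one spectral projections $p_1,\dots,p_n$ is a polynomial in $X^*X$ (by Lagrange interpolation), hence also $\alpha$-fixed: $\alpha(p_i)=p_i\otimes 1$. Choosing unit vectors $v_i\in\bC^n$ with $p_i=v_iv_i^*$ then produces an orthonormal basis $(v_i)_{i=1}^n$ of $\bC^n$, in which I shall read off the conclusion.

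The last step translates fixedness of the $p_i$ into diagonality of the underlying corepresentation. Writing the fundamental corepresentation $u=(u_{ji})_{j,i}$ of $\Qut(M_n,\tau)$ on $\bC^n$ in the basis $(v_i)$, so that $\beta(v_i)=\sum_j v_j\otimes u_{ji}$, the induced conjugation action on $M_n$ reads
\[
\alpha(v_iv_j^*)=\sum_{k,l}v_kv_l^*\otimes u_{ki}u_{lj}^*.
\]
Setting $i=j$ and comparing with $\alpha(p_i)=p_i\otimes 1$ forces $u_{ki}u_{ki}^*=0$ for $k\neq i$, hence $u_{ki}=0$ for $k\neq i$ in the $C^*$-completion of $H$. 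Therefore $\beta(v_i)=v_i\otimes u_{ii}$, i.e.\ $\beta$ is diagonal in the basis $(v_i)$, which is exactly the essential diagonality promised by \Cref{def:qdiag}\Cref{item:def:qdiag:diag.act}.

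No deep obstacle arises: the argument is a short diagram-chase plus two lines of $C^*$-computation. The only step requiring mild care is the last one, where one reads off diagonality at the level of $\bC^n$ from the fixedness of the rank-one projections $p_i$; the displayed formula above reduces this to the elementary observation that $aa^*=0$ entails $a=0$ in any $C^*$-algebra.
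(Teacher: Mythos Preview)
Your argument that $X^*X$ is $\alpha$-fixed (and hence so are its spectral projections $p_i$) is correct and more explicit than the paper's, which instead observes that the coefficient $a$ is group-like (from coassociativity of $\alpha$) and hence automatically unitary in the CQG algebra $H$.

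The gap is in your final step. You invoke ``the fundamental corepresentation $u=(u_{ji})$ of $\Qut(M_n,\tau)$ on $\bC^n$'', but no such corepresentation exists: $\Qut(M_n,\tau)$ is the quantum analogue of the \emph{projective} unitary group and does not act on $\bC^n$. (The paper stresses exactly this point in the paragraph preceding \Cref{prop:equiv.choi}.) One therefore cannot write $\alpha$ as conjugation by some corepresentation $\beta$ on $\bC^n$ \emph{a priori}; that such a lift exists is part of what is being proved.

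The repair is to argue entirely inside $M_n$. With $e_{ij}:=v_iv_j^*$, fixedness of $p_i=e_{ii}$ together with multiplicativity of $\alpha$ gives
\[
\alpha(e_{ij})=(e_{ii}\otimes 1)\,\alpha(e_{ij})\,(e_{jj}\otimes 1)\in \bC\, e_{ij}\otimes H,
\]
so $\alpha(e_{ij})=e_{ij}\otimes c_{ij}$ for certain $c_{ij}\in H$. Coassociativity makes each $c_{ij}$ group-like; the algebra and $*$-structure give $c_{ij}c_{jk}=c_{ik}$, $c_{ij}^*=c_{ji}$, $c_{ii}=1$. Setting $g_i:=c_{i1}$ yields $c_{ij}=g_ig_j^{-1}$, and $s_i\mapsto g_i$ defines a Hopf $*$-algebra map $\cO(\D^+(n))=\bC F_n\to H$ through which $\alpha$ factors. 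This is what the paper's one-line conclusion ``assuming that operator diagonal, the coaction will factor through $M_n\to M_n\otimes\cO(\D^+(n))$'' is recording.
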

\begin{proof}
  The coaction of $\cO(\G)$, $\G:=\Qut(\bC X\subseteq M_n)$ is by assumption of the form
  \begin{equation*}
    X
    \xmapsto{\quad\rho\quad}
    X\otimes \delta
    ,\quad
    \delta\in \cO(\G)
    \text{ \emph{group-like} \cite[Deﬁnition 2.1.10]{rad}}.
  \end{equation*}
  $\rho$ thus fixes the simple-spectrum self-adjoint operator $X^* X$; assuming that operator diagonal, the coaction will factor through $M_n\to M_n\otimes \cO(\D^+(n))$. 
\end{proof}

\begin{corollary}\label{cor:gen.diag}
  For $3\le n$ and $1\le d\le n^2-2$ $\Qut(\cS\le M_n)$ and $\Qut(\mathcal{G})$ generically act essentially diagonally in the sense of \Cref{def:qdiag}\Cref{item:def:qdiag:diag.act}.  
\end{corollary}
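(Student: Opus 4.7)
My plan is to leverage \Cref{prop:degreefixed} and \Cref{pr:leave.diag.inv} via the degree matrix $D$. First, by \Cref{prop:degreefixed} the coaction of either $\Qut(\mathcal{G})$ or $\Qut(\cS \le M_n)$ fixes $D$, hence preserves the line $\bC D \le M_n$; the universal property of $\Qut(\bC D \le M_n)$ then exhibits both quantum groups as quantum subgroups of it inside $\Qut(M_n, \tau)$. Since $D = \sum_i X_i X_i^* \ge 0$, its square $D^2 = D^*D$ has simple spectrum if and only if $D$ does, so \Cref{pr:leave.diag.inv} applied with $X = D$ will make $\Qut(\bC D \le M_n)$ act essentially diagonally on $M_n$; essential diagonality is inherited by quantum subgroups (composition of the factorization through $\D^+(n)$ with the map $\G' \to \Qut(\bC D \le M_n)$), so the conclusion follows for $\Qut(\cS \le M_n)$ and $\Qut(\mathcal{G})$.

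This reduces the statement to the assertion that $D_\cS$ has simple spectrum for generic $\cS$. The assignment $\cS \mapsto D_\cS$ is a well-defined (orthonormal-basis-independent) regular map $\bG_{sa}(d+1, M_n) \to \mathrm{Herm}(n)$, and the simple-spectrum locus in $\mathrm{Herm}(n)$ is the Zariski-open complement of the discriminant hypersurface. Hence the set of $\cS$ with simple-spectrum $D_\cS$ is Zariski-open in $\bG_{sa}(d+1, M_n)$, and the remaining question is non-emptiness. For the codimension-one endpoint $d = n^2 - 2$, I would take $\cS^\perp = \bC Y$ with $Y$ traceless self-adjoint having distinct singular values, so that $D_\cS = n^2 \1 - YY^*/\|Y\|^2$ has simple spectrum. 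For other admissible $d$, I would construct $\cS$ directly, e.g.~spanning it with $\1$, a few diagonal Hermitian generators with distinct ``power'' entries $\mathrm{diag}(1, 2^k, \ldots, n^k)$, and, once the dimension forces one outside the diagonal, a sprinkling of off-diagonal Hermitian elements chosen to break any residual eigenvalue coincidences in $D_\cS$.

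The principal obstacle is this non-emptiness step, carried out uniformly across the full range $1 \le d \le n^2 - 2$. The two extremes are transparent, but the intermediate dimensions require either a case-by-case explicit construction or a more conceptual argument. One such route, which I would attempt in case the direct approach becomes unwieldy, is to exploit the $\U(n)$-equivariance of $\cS \mapsto D_\cS$ under conjugation: the image of the map is $\U(n)$-invariant in $\mathrm{Herm}(n)$, so it suffices to hit a single $\U(n)$-orbit of simple-spectrum matrices with the correct trace $(d+1)n$, and one may then deform through the Grassmannian using Zariski-openness of simple spectrum to propagate from one admissible $d$ to the next.
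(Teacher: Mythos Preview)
Your approach is essentially the same as the paper's: both argue via \Cref{prop:degreefixed} that $D$ is fixed, then invoke \Cref{pr:leave.diag.inv} once $D$ (equivalently $D^*D=D^2$, as you correctly note) has simple spectrum. The paper's proof is a one-liner that dispatches the generic-simple-spectrum claim by citing \cite[proof of Theorem 3.24]{zbMATH07502493}, whereas you attempt to establish non-emptiness directly; your endpoint constructions are sound, and the intermediate cases you flag as the ``principal obstacle'' are precisely what the cited reference already handles.
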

\begin{proof}
  In both cases the degree matrix $D$ is fixed with almost-surely simple spectrum \cite[proof of Theorem 3.24]{zbMATH07502493}, hence the conclusion via \Cref{pr:leave.diag.inv}.
\end{proof}

This in hand, we can revisit the issue of contrasting the two notions of quantum automorphism group discussed in \Cref{def:qut.adj,def:qut.opsys}. 

\begin{example}\label{ex:cls.quts}
  By \Cref{cor:gen.diag} it suffices to assume that $\Qut(\Braket{\cX}\le M_n)$ operates via the conjugation of a $\Gamma$-grading, where $F_n\xrightarrowdbl{}\Gamma$ is a quotient of the free group $F_n=\Braket{s_i}_{i=1}^n$ assigning the standard basis element $e_i\in \bC^n$ degree $s_i$. The claim is that even when $Y_j$ are respectively $\gamma_j$-homogeneous for $\gamma_j\in \Gamma$ (which we can always assume under the circumstances),
  \begin{equation*}
    M_n \ni x
    \xmapsto{\quad \Phi_{\cS}\in \End(M_n)\quad}
    \sum_{j=1}^{\dim \cS} Y_j x Y_j^*
    \in M_n
    ,\quad
    \cS:=\bC\oplus \Braket{\cX}
  \end{equation*}
  need not be a graded map.
  
  Indeed, consider a \emph{single} $\gamma$-homogeneous $Y=Y_j$ for non-central $\gamma\in \Gamma$ (so that $\Braket{\cX}=\spn\left\{Y,Y^*\right\}$) in such a fashion as to ensure the existence of some $\gamma'$-homogeneous $x\in M_n$ with $[\gamma,\gamma']\ne 1$ and $YxY^*\ne 0$. This is easily arranged: set $n:=|\Gamma|$ for a finite group $\Gamma$, identify $\bC^n\cong \bC\Gamma$ (the group algebra), and take for $Y$ and $x$ the left translation operators by non-commuting $\gamma,\gamma'\in \Gamma$ respectively. We then have
  \begin{equation*}
    YxY^*=\text{translation by $\gamma\gamma'\gamma^{-1}\ne \gamma'$},
  \end{equation*}
  meaning that $Y\cdot Y^*$ does not preserve degrees. 
\end{example}

Classically the automorphism group of $M_n$ is the \emph{projective} unitary group and one cannot go through the usual unitary group, in particular the automorphism group does not act on $\bC^n$. In our setting, generically the quantum automorphism group will sit inside $\D^+(n)$, so it acts on $\bC^n$. This nice algebraic property will allow us to compare the two versions of the quantum automorphism group.

\begin{proposition}\label{prop:equiv.choi}
 Let $V$ and $W$ be finite-dimensional unitary representations of a compact quantum group. Then the Choi correspondence between completely positive maps $\Phi: V\otimes V^{\ast} \to W \otimes W^{\ast}$ and positive operators $C_{\Phi} \in \End(W^{\ast}\otimes V)$ is equivariant.
\end{proposition}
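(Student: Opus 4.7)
The plan is to exhibit the Choi correspondence as a morphism assembled from the canonical structural maps of the rigid $C^*$-tensor category $\cC$ of finite-dimensional unitary $G$-representations, so that its equivariance becomes automatic by naturality.

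Concretely, I would write the Choi matrix as the composition
\[
C_\Phi \;=\; \bigl(\ev_W \otimes \id_{W^* \otimes V}\bigr) \circ \bigl(\id_{W^*} \otimes \Phi \otimes \id_V\bigr) \circ \bigl(\id_{W^* \otimes V} \otimes \coev_{V^*}\bigr),
\]
where $\coev_{V^*}: \bC \to V^* \otimes V$, $1 \mapsto \sum_k e_k^* \otimes e_k$, and $\ev_W: W^* \otimes W \to \bC$ are the standard coevaluation and evaluation of the left-duality structure on $\cC$. The first step is to verify that $\coev_{V^*}$ and $\ev_W$ are $G$-equivariant; this is a direct consequence of the unitarity relations $uu^* = u^*u = 1$ satisfied by the coaction matrices of $V$ and $W$, and it is important here that one uses the left-dualities, i.e.\ the factors $V^*$ and $W^*$ are on the \emph{left} of their respective tensors (unlike, say, $V \otimes V^*$ with the dualized factor on the right, whose evaluation is not equivariant for a non-Kac-type quantum group).

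Next I would invoke the following general categorical fact: for any fixed $G$-equivariant morphisms $\alpha$ and $\beta$ in $\cC$ and any identity factors, the linear map sending $\Phi$ to $\beta \circ (\id \otimes \Phi \otimes \id) \circ \alpha$ is itself a $G$-comodule morphism between the natural $G$-comodule structures on the relevant $\Hom$-spaces. The $\Phi$-dependent piece $\id \otimes \Phi \otimes \id$ is a linear assignment of elements of $\Hom$-spaces that respects the natural coactions (tensoring by a fixed equivariant identity commutes with the conjugation coaction), and pre- and post-composing with fixed equivariant maps preserves this equivariance. Applying this to the displayed formula yields the desired $G$-equivariance of $\Phi \mapsto C_\Phi$ as a map between the $G$-comodules $\Hom(V \otimes V^*, W \otimes W^*)$ and $\End(W^* \otimes V)$.

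The main subtlety I expect is that the category $\cC$ is \emph{not} symmetric or even braided for a genuine compact quantum group, so tensor factors cannot be permuted freely, and this is precisely why the sidedness in the formula for $C_\Phi$ matters. A naive ``Choi matrix'' defined by swapping or reordering factors (as one would do without thinking in the classical case) produces a linear isomorphism but generically fails to intertwine the coactions; only the specific composition above, using the equivariant left-duality maps in their correct orderings, is guaranteed to be equivariant. The technical point to verify is that this categorical formula does recover the classical Choi correspondence in the commutative case, and hence is indeed the object named in the proposition.
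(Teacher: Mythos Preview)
Your proposal is correct and takes essentially the same approach as the paper: both express $C_\Phi$ as the composition $(\ev_W \otimes \id_{W^*\otimes V}) \circ (\id_{W^*} \otimes \Phi \otimes \id_V) \circ (\id_{W^*\otimes V} \otimes \db_{V^*})$ using the duality morphisms of the rigid $C^*$-tensor category of unitary $\G$-representations, so that equivariance is automatic. The paper's proof is terser (it only records that equivariant $\Phi$ yields equivariant $C_\Phi$, which is all that is used downstream), whereas you spell out the slightly stronger comodule-morphism version of the claim; your caution about sidedness is well placed, though in the rigid $C^*$-category framework both $R$ and $\overline{R}$ are morphisms, so the issue is less left-versus-right duality than using the categorically correct (possibly modular-twisted) solutions to the conjugate equations rather than naive basis-dependent formulas.
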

\begin{proof}
This is essentially \cite[Theorem 4.13]{MR4482713}. We provide some details for the reader's convenience. Note that
  \begin{equation}\label{eq:def.cphi}
    \begin{tikzpicture}[>=stealth,auto,baseline=(current  bounding  box.center)]
      \path[anchor=base] 
      (0,0) node (l) {$W^*\otimes V$}
      +(-1,2) node (ul) {$W^*\otimes V\otimes V^*\otimes V$}
      +(7,2) node (ur) {$W^*\otimes W\otimes W^*\otimes V$}
      +(6,0) node (r) {$W^*\otimes V$}
      ;
      \draw[->] (l) to[bend left=6] node[pos=.5,auto] {$\scriptstyle \id_{W^*\otimes V}\otimes \db_{V^*}$} (ul);
      \draw[->] (ul) to[bend left=6] node[pos=.5,auto] {$\scriptstyle \id_{W^*}\otimes \Phi\otimes \id_V$} (ur);
      \draw[->] (ur) to[bend left=6] node[pos=.5,auto] {$\scriptstyle \ev_W\otimes \id_{W^*\otimes V}$} (r);
      \draw[->] (l) to[bend right=6] node[pos=.5,auto,swap] {$\scriptstyle C_{\Phi}$} (r);
    \end{tikzpicture}
  \end{equation}
  where
  \begin{itemize}[wide]
  \item asterisks denote duals in \emph{rigid monoidal $C^*$-categories} \cite[Definition 2.2.1]{NeTu13};

  \item and the morphisms
    \begin{equation*}
      \mathbf{1}
      \xrightarrow[\quad\text{for `dual basis'}\quad]{\quad\db_Z\quad}
      Z\otimes Z^*
      \quad\text{and}\quad
      Z^*\otimes Z
      \xrightarrow[\quad\text{for `evaluation'}\quad]{\quad\ev_Z\quad}
      \mathbf{1}
    \end{equation*}
    ($\mathbf{1}$ being the monoidal unit) are those witnessing the duality (what \cite[Definition 2.2.1]{NeTu13} would denote by $\overline{R}$ and $R^*$ respectively).
  \end{itemize}
This means if $\Phi$ is equivariant, i.e. is a morphism in the representation category, then so is $C_{\Phi}$. A similar formula expresses $\Phi$ in terms of $C_{\Phi}$.  
\end{proof}

The following result is worked out here for completeness; it is easily extracted from \cite[Remark 4 and proof of Theorem 1]{MR376726}, modulo inessential linguistic differences. It relates the operator system to the Choi matrix of the quantum adjacency matrix equivariantly.

\begin{lemma}\label{le:trnsp.inv}
  Let $V$ and $W$ be finite-dimensional unitary representations of a compact quantum group and
  \begin{equation}\label{eq:gen.cp}
    V\otimes V^*
    \xrightarrow{\quad\Phi=\sum_{\ell} Y_{\ell}\cdot Y^*_{\ell}\quad}
    W\otimes W^*
    ,\quad
    Y_{\ell}\in W\otimes V^*
  \end{equation}
  an equivariant completely positive map. 

  The image of the attached equivariant morphism $C_{\Phi}$ of \Cref{eq:def.cphi} is precisely the span of $\overline{Y_{\ell}}$, using the identification 
  \begin{equation}\label{eq:trnsp}
  W^{\ast}\otimes V \simeq \overline{W} \otimes V \simeq \overline{W \otimes \overline{V}}.
  \end{equation}
 
 
\end{lemma}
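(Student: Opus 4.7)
The plan is to chase the diagram \Cref{eq:def.cphi} explicitly and recognize $C_\Phi$ as an honest sum of rank-one positive operators, at which point the image is manifest. I would fix orthonormal bases $(v_a)\subset V$, $(w_i)\subset W$, with dual bases $(v^a)\subset V^*$, $(w^i)\subset W^*$; then $\db_{V^*}(1)=\sum_a v^a\otimes v_a$ and $\ev_W(w^i\otimes w_j)=\delta^i_j$. Writing $Y_\ell=\sum_{i,a}(Y_\ell)_{ia}\,w_i\otimes v^a$, the Kraus form of $\Phi$ sends the rank-one operator $v_c\otimes v^a\in V\otimes V^*$ to $\sum_{\ell,i,j}(Y_\ell)_{ic}\overline{(Y_\ell)_{ja}}\,w_i\otimes w^j\in W\otimes W^*$.

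Chasing through the composition and contracting $w^k$ with $w_i$ via $\ev_W$, one obtains
\begin{equation*}
C_\Phi(w^k\otimes v_c)=\sum_\ell (Y_\ell)_{kc}\,\widetilde{Y_\ell},\qquad \widetilde{Y_\ell}:=\sum_{j,a}\overline{(Y_\ell)_{ja}}\,w^j\otimes v_a\in W^*\otimes V.
\end{equation*}
Since $(Y_\ell)_{kc}=\langle \widetilde{Y_\ell},\,w^k\otimes v_c\rangle$ in the natural inner product of $W^*\otimes V$, this reads $C_\Phi=\sum_\ell |\widetilde{Y_\ell}\rangle\langle \widetilde{Y_\ell}|$: a positive operator whose kernel is manifestly $(\spn\{\widetilde{Y_\ell}\})^\perp$ and whose image is therefore exactly $\spn\{\widetilde{Y_\ell}\}$.

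It only remains to identify $\widetilde{Y_\ell}$ with $\overline{Y_\ell}$ under the identification \Cref{eq:trnsp}: the antilinear isomorphisms $\overline{V^*}\simeq V$ and $\overline{W}\simeq W^*$ (the latter via the inner product) send the bar-basis elements $\overline{v^a}\mapsto v_a$ and $\overline{w_i}\mapsto w^i$, so $\overline{Y_\ell}=\sum_{i,a}\overline{(Y_\ell)_{ia}}\,\overline{w_i}\otimes \overline{v^a}$ maps exactly to $\widetilde{Y_\ell}$. The main obstacle here is purely combinatorial bookkeeping through these dualization and bar-conjugation identifications; no substantive idea beyond Choi's classical computation is needed once the indices are pinned down, and the equivariance statement is automatic since the whole construction is internal to the representation category (by \Cref{prop:equiv.choi}).
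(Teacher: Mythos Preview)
Your proof is correct and follows essentially the same diagram chase as the paper: both trace a basis element of $W^*\otimes V$ through \Cref{eq:def.cphi} and arrive at the formula $C_\Phi(w^k\otimes v_c)=\sum_\ell (Y_\ell)_{kc}\,\overline{Y_\ell}$. The only difference is the final step: the paper argues that $\spn\{\sum_\ell (Y_\ell)_{kc}\overline{Y_\ell}\}_{k,c}=\spn\{\overline{Y_\ell}\}_\ell$ by comparing dimensions via the tensor rank of $((Y_\ell)_{kc})_{\ell,k,c}$, whereas you recognize $C_\Phi=\sum_\ell|\widetilde{Y_\ell}\rangle\langle\widetilde{Y_\ell}|$ directly as a positive operator, from which the image is immediate. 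Your endgame is slightly cleaner in that it exploits the positivity of the Choi matrix rather than an auxiliary rank count; the paper's version, on the other hand, does not need to verify the inner-product identity $(Y_\ell)_{kc}=\langle\widetilde{Y_\ell},w^k\otimes v_c\rangle$.
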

\begin{proof}
  We begin by examining the map $C_{\Phi}$ (frequently referred to as the \emph{Choi matrix} \cite[proof of Theorem 3]{MR3114209} of $\Phi$, in light of \cite[Theorem 2]{MR376726}) in the specific case of $\Phi=Y\cdot Y^*$ for $Y\in \Hom(V,W)\cong W\otimes V^*$. Some pertinent notation:
\begin{itemize}[wide]
\item $e$s and $f$s denote basis elements of $V$, $V^*$ and $W$, $W^*$ respectively;

\item lower (upper) induces indicate basis elements for $V$, $W$ and their dual counterparts in $V$, $W^*$ respectively;

\item repeated indices in the same expression indicate a suppressed sum (the familiar \cite{mw_einstein-summation} \emph{Einstein summation} convention). 
\end{itemize}
All of this in place, tracing a basis element $f^r\otimes e_i\in W^*\otimes V$ through \Cref{eq:def.cphi} produces
\begin{equation*}
  \begin{tikzpicture}[>=stealth,auto,baseline=(current  bounding  box.center)]
    \path[anchor=base] 
    (0,0) node (1) {$f^r\otimes e_i$}
    +(1,1) node (2) {$f^r\otimes e_i\otimes e^j\otimes e_j$}
    +(7,1) node (3) {$f^r\otimes \Phi(e_{ij})\otimes e_j$}
    +(8,0) node (4) {$f^r\circ \Phi(e_{ij})\otimes e_j$}
    +(0,-1) node (5) {$f^r\circ (y_{pi}\overline{y_{qj}})_{p,q}\otimes e_j$}
    +(3,-2) node (6) {$(y_{ri}\overline{y}_{qj})_{q}\otimes e_j$}
    +(6,-2) node (7) {$y_{ri}\overline{y}_{qj}f^{q}\otimes e_j$}
    +(9,-1.5) node (8) {$y_{ri}\overline{Y}$}
    ;

    \draw[|->] (1) to[bend left=6] node[pos=.5,auto] {$\scriptstyle $} (2);
    \draw[|->] (2) to[bend left=6] node[pos=.5,auto] {$\scriptstyle $} (3);
    \draw[|->] (3) to[bend left=6] node[pos=.5,auto] {$\scriptstyle $} (4);
    \draw[double equal sign distance] (4) .. controls +(5,-1) and +(-5,1) .. (5) node[pos=.5,auto,swap] {$\scriptstyle \Phi=Y\cdot Y^*$};
    \draw[double equal sign distance] (5) to[bend right=6] node[pos=.5,auto] {$\scriptstyle $} (6);
    \draw[double equal sign distance] (6) to[bend right=6] node[pos=.5,auto] {$\scriptstyle $} (7);
    \draw[double equal sign distance] (7) to[bend right=6] node[pos=.5,auto] {$\scriptstyle $} (8);
  \end{tikzpicture}
\end{equation*}
In the general case of \Cref{eq:gen.cp} the range of $C_{\Phi}$ will thus be precisely the span of $\sum_{\ell}y_{\ell,ri}\overline{Y}_{\ell}$ for varying $i$ and $r$. That
\begin{equation*}
  \spn\left\{\sum_{\ell}y_{\ell,ri}\overline{Y}_{\ell}\right\}_{i,r}
  =
  \spn\left\{\overline{Y}_{\ell}\right\}_{\ell}
\end{equation*}
is easily seen: the $\le$ inclusion is self-evident, and the dimensions coincide (with the \emph{rank} \cite[Definition 3.35]{hck_tens_2e_2019} of the tensor $(y_{\ell,ri})_{\ell,i,r}$). 
\end{proof}

\pf{th:qut.adj.smlr}
\begin{th:qut.adj.smlr}
   Let $\alpha$ be the action of $\Qut_{\varphi}(\mathcal{G})$ on $M_n$. From \Cref{prop:equiv.choi} we know that the Choi matrix of the quantum adjacency matrix of $\mathcal{G}$ is equivariant. It follows that its range, identified with $\overline{\cS}$ in \Cref{le:trnsp.inv}, is invariant and hence $\Qut_{\varphi}(\mathcal{G})\le \Qut_{\varphi}(\overline{\cS} \le M_n)$. On the other hand, the action $\beta$ of $\Qut_{\varphi}(\overline{\cS} \le M_n)$ on $M_n$ leaves $\overline{\cS}$ invariant. The Choi matrix must thus be equivariant, and with it the associated quantum adjacency matrix.
\end{th:qut.adj.smlr}

Naturally, being interested in generic behavior, knowledge on almost all complex conjugates $\overline{\cS}$ translates to knowledge on almost all operator systems $\cS$.

\begin{corollary}\label{cor:diag.spat}
  Let $1\leqslant d \leqslant n^2-2$. For generic operator $d$-systems $\cS\le M_n$ (and associated quantum graphs $\mathcal{G}$) we have
  \begin{equation*}
    \Qut(\mathcal{G})= \Qut\left(\overline{\cS} \le M_n\right),
  \end{equation*}
  $\overline{\cS}$ denoting the complex conjugate of $\cS$.
\end{corollary}
\begin{proof}
  It follows from \Cref{cor:gen.diag} that both actions on $M_n$ are essentially diagonal, so \Cref{th:qut.adj.smlr} applies to the embedding $\D^+(n)\lhook\joinrel\xrightarrow{\varphi}\U^+(n)$. 
\end{proof}

In particular:

\begin{corollary}\label{cor:2qgps.gen.triv}
  $\Qut(\cS \le M_n)$ is generically trivial if and only if $\Qut(\mathcal{G})$ is.  \qedhere
\end{corollary}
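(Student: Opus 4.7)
The plan is to chain together \Cref{th:qut.adj.smlr} with a transport-of-structure argument under the complex-conjugation involution on the Grassmannian.

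First I would record the key reduction: by \Cref{th:qut.adj.smlr}, there is a non-empty Zariski-open set $U\subseteq \bG_{sa}(d,M_n)$ such that for every $\cS\in U$ one has an equality $\Qut(\mathcal{G})=\Qut(\overline{\cS}\le M_n)$. Hence ``$\Qut(\mathcal{G})$ is trivial on a generic $\cS$'' is equivalent to ``$\Qut(\overline{\cS}\le M_n)$ is trivial on a generic $\cS$'', because the intersection of two non-empty real-Zariski opens in an irreducible (real-algebraic) variety is again non-empty open. So the whole task reduces to showing that generic triviality of $\Qut(\cT\le M_n)$ as $\cT$ varies is equivalent to generic triviality of $\Qut(\overline{\cS}\le M_n)$ as $\cS$ varies.

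Second, I would observe that entry-wise complex conjugation
\begin{equation*}
  \kappa\;:\;\bG_{sa}(d,M_n)\longrightarrow \bG_{sa}(d,M_n),\qquad \cS\mapsto \overline{\cS}
\end{equation*}
is an involution of real-algebraic varieties: self-adjointness is preserved, and in Plücker-type coordinates $\kappa$ acts by conjugating coefficients, which is a real-polynomial (in fact real-affine) map. In particular $\kappa$ is a homeomorphism for the real Zariski topology, so $\kappa(U)$ is a non-empty Zariski-open whenever $U$ is. Consequently, for any property $P$ of operator $d$-systems, $P$ holds generically for $\cS$ if and only if $P(\overline{\,\cdot\,})$ does. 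Applying this with $P(\cT)=\left[\Qut(\cT\le M_n)=\{1\}\right]$ yields the desired equivalence between generic triviality for $\overline{\cS}$ and for $\cS$ itself.

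Combining the two steps: $\Qut(\mathcal{G})$ is generically trivial iff $\Qut(\overline{\cS}\le M_n)$ is generically trivial (by \Cref{th:qut.adj.smlr}) iff $\Qut(\cS\le M_n)$ is generically trivial (by the $\kappa$-invariance of genericity). The only potentially delicate point is verifying that $\kappa$ is a genuine real-Zariski automorphism of $\bG_{sa}(d,M_n)$ rather than just a homeomorphism for the analytic topology; but this is immediate once one writes out the Plücker embedding, since the Hermitian condition cutting out $\bG_{sa}(d,M_n)$ inside $\bG(d,M_n)$ is itself preserved by $\kappa$ in coordinates.
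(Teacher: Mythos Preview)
Your argument is correct and is essentially the same as the paper's: the authors simply remark that ``knowledge on almost all complex conjugates $\overline{\cS}$ translates to knowledge on almost all operator systems $\cS$'' and deduce the corollary immediately from \Cref{th:qut.adj.smlr}. You have written out in detail precisely the transport-of-structure step (that $\kappa$ is a real-Zariski involution) and the intersection-of-opens step that the paper leaves implicit.
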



\begin{remark}\label{re:redo.gp.grd}
  It will be instructive to revisit \Cref{ex:cls.quts} and illustrate how the complex conjugation \Cref{eq:trnsp} (related to the opposite action of \cite[Theorem 9.18]{MR4706978}) affects matters in one particular class of examples. 

In the present case we have $V=W=\bC \Gamma$, and the index set $I\ni i$ for a basis $(e_i)_I$ is $\Gamma$ itself. For $Y$ we choose
  \begin{equation*}
    Y=\sum_{i\in \Gamma} e_{\gamma i,i}
    \quad\text{with}\quad
    \deg e_{pq}=pq^{-1}\in \Gamma.
  \end{equation*}
  Now, the complex conjugate $\overline{Y}$ will be $\sum_i f_{\gamma i, i}$ ($e\to f$ change in lettering will help with the bookkeeping), with the caveat that these are now matrix units in $V^*\otimes V\cong \End(V^*)$ instead (cf. \Cref{eq:trnsp}) and hence
  \begin{equation*}
    \deg f_{pq}=p^{-1}q
    \xRightarrow{\quad}
    \deg f_{\gamma i, i}=i^{-1}\gamma^{-1} i
    \xRightarrow{\quad}
    \overline{Y}\in \End\left(V^*\right)
    \text{ is not (generally) homogeneous}.
  \end{equation*}
  We thus have
  \begin{equation*}
    (\overline{Y})\bullet (\overline{Y})^{\ast}
    =
    \left(\sum_j f_{\gamma j, j} \right)\bullet \left(\sum_i f_{i,\gamma i}\right)
    \quad
    \text{acting on}
    \quad
    \End(V^*),
  \end{equation*}
  which \emph{is} a graded map: it sends the degree-$j^{-1}i$-element $f_{ji}$ to the element $f_{\gamma j,\gamma i}$, of degree
  \begin{equation*}
    (\gamma j)^{-1}\cdot \gamma i = j^{-1}i.
  \end{equation*}
  In short:
  \begin{itemize}[wide]
  \item $Y\in \End(V)$ is homogeneous, but its associated map $Y\cdot Y^{\ast}$ on $\End(V)$ is not;

  \item on the other hand, the map $\overline{Y}\cdot (\overline{Y})^{\ast}$ attached to the \emph{in}homogeneous $\overline{Y}\in \End(V^*)$ \emph{is} homogeneous on $\End(V^*)$.
  \end{itemize}
\end{remark}

\section{Generic operator systems}\label{se:gen.osys}

In this section we prove that generic quantum graphs do not have quantum symmetries. It follows from \Cref{se:2qgps} that it does not matter, which notion of the quantum automorphism group we use for this particular task, so we can choose one; it will be more convenient for us to use $\Qut(\cS \le M_n)$.

\begin{definition}\label{def:gen.rig}
  An operator system $\cS\le B$ in a finite-dimensional $C^*$-algebra $B$ equipped with a state $\tau$ is \emph{$(B,\tau)$-quantum-rigid} (just \emph{quantum-rigid} when the setting is understood) if the quantum automorphism group $\Qut(\cS\le B)$ operates on $\cS$ trivially.

  $B$ will mostly be $M_n$, with $\tau$ the usual tracial state unless the convention is explicitly overruled. 
\end{definition}

\begin{definition}\label{def:nd.qrig}
  Let $n\in \bZ_{\ge 1}$ and $1\le d\le n^2$.

  \begin{enumerate}[(1),wide]
  \item We refer to the claim that for the generic operator $d$-system $\cS\le M_n$ the quantum group $\Qut(\cS\le M_n)$ is trivial as $\tensor*[_{n}]{\cat{qRig}}{_{d}}$.

  \item We also write $\tensor*[_{P}]{\cat{qRig}}{_{Q}}$ for the conjunction of $\tensor*[_{n}]{\cat{qRig}}{_{d}}$ with the parameters $n$ and $d$ constrained by various conditions $P$ and $Q$. The paradigmatic example is $\cat{qRig}_{3\le n\le N,[\alpha(n),\beta(n)]}$, meaning
    \begin{equation*}
      \forall\left(3\le n\le N\right)
      \forall\left(d\in [\alpha(n),\beta(n)]\right)
      \quad:\quad
      \tensor*[_{n}]{\cat{qRig}}{_{d}}
    \end{equation*}
    (for functions $\alpha(n)\le \beta(n)$).

  \item The $\cat{qRig}$ statements also specialize to individual tuples of matrices (or subspaces of $M_n$): $\cat{qRig}(\cX)=\cat{qRig}(\Braket{\cX})$ for a collection $\cX\subset M_n$ means that the quantum automorphism group $\Qut(\braket{\cX}\subset M_{n})$ is trivial.

  \item It will be convenient, on occasion, to relativize the $\cat{qRig}$ conditions: we say that one such holds \emph{in (or relative to) $\G$} for a quantum subgroup $\G\le \Qut(M_n,\tau)$ if the quantum subgroup of $\G$ preserving $\Braket{X}$ is trivial for generic $\cX$. 
  \end{enumerate}
\end{definition}

One of the important tools for extending results from \cite{zbMATH07502493} will be the following.

\begin{theorem}\cite[Theorem A]{2505.07485v1}\label{thm:gen.triv}
Let $V$ be a finite dimensional representation of a compact quantum group $\G$ and let $\G_{W}\le \mathbb{G}$ be the isotropy quantum subgroup of $W \le \bG(d,V)$, where $\bG(d,V)$ is the complex Grassmannian consisting of $d$-dimensional subspaces of $V$. Then the subset $\{W \in \bG(d,V): \G_W \text{ acts trivially on }W\text{ or }V \}$ is open in the Zariski topology on the \emph{Weil-restricted} \cite[\S 7.6]{blr_neron} $\mathrm{Res}_{\bC\slash \mathbb{R}}(\bG(d,V))$, hence of full measure if non-empty.
\end{theorem}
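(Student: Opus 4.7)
The strategy is to realize the set in the theorem as the complement of a real Zariski-closed subvariety of $\mathrm{Res}_{\bC/\bR}(\bG(d,V))$, and then invoke the standard fact that a nonempty Zariski-open subset of an irreducible real algebraic variety is dense and of full Lebesgue measure. The first observation is that ``$\G_W$ acts trivially on $V$'' is the stronger of the two alternatives: it automatically forces triviality on the subspace $W\subseteq V$. The union of the two loci therefore coincides with
\[
S \;:=\; \bigl\{W\in\bG(d,V):\G_W\text{ acts trivially on }W\bigr\},
\]
and it is this set whose Zariski openness I would aim to establish.

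Setting $H := \cO(\G)$ with coaction $\rho:V\to V\otimes H$ and matrix coefficients $u_{ji}\in H$ in a fixed orthonormal basis of $V$, the isotropy quantum subgroup $\G_W$ has CQG algebra $H_W = H/I_W$, where $I_W$ is the two-sided Hopf $*$-ideal generated by the entries of the composite
\[
W\hookrightarrow V\xrightarrow{\ \rho\ }V\otimes H\twoheadrightarrow (V/W)\otimes H.
\]
In Plücker-type coordinates for $W$, these generators are polynomial expressions in the data of $W$ and the fixed elements $u_{ji}\in H$. The condition $W\in S$ then amounts to the ideal containments $\rho(w)-w\otimes 1\in V\otimes I_W$ for all $w\in W$; equivalently, the $d\times d$ matrix coefficients of $\rho|_W$ reduce to the identity matrix in $H_W$.

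The core of the argument is to show this ideal-membership condition is \emph{Zariski open} as $W$ varies. Cosemisimplicity of $H$ permits replacing the infinite-dimensional Hopf algebra by its finite-dimensional isotypic components, and $I_W$ decomposes accordingly. On each such piece, membership in $I_W$ is a rank condition on a matrix whose entries are polynomial in the Plücker coordinates of $W$; the \emph{failure} of membership is witnessed by the non-vanishing of finitely many minors, cutting out a Zariski-closed subset whose complement is $S$. A parallel argument handles the variant where triviality is tested on $V$ rather than on $W$.

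The principal obstacle is precisely this algebraicity step: reducing the family of ideal-membership relations defining $S$ — which a priori involve the infinite-dimensional algebra $H$ — to finitely many polynomial conditions on $W$, while tracking how $I_W$ itself varies with $W$. Once this reduction is in place, the rest is soft: any nonempty Zariski-open subset of the irreducible real variety $\mathrm{Res}_{\bC/\bR}(\bG(d,V))$ is Zariski-dense, and for real algebraic subsets Zariski-openness forces full Lebesgue measure.
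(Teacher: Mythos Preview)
The paper does not itself prove this theorem; it is quoted from \cite[Theorem A]{2505.07485v1}. However, the paper does prove a close variant, \Cref{th:gen2blocks}, whose argument indicates the intended method and clarifies where your sketch falls short.

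Your overall architecture is right: reduce to showing that $S=\{W:\G_W\text{ acts trivially on }W\}$ is Zariski-open, then invoke irreducibility of the Grassmannian. But the step you yourself flag as ``the principal obstacle'' is a genuine gap, not a routine detail. The ideal $I_W\subset H$ is generated as a Hopf $*$-ideal by elements polynomial in $W$, yet passing from those generators to the full ideal involves multiplication by arbitrary elements of the infinite-dimensional $H$ and iterated applications of the antipode and $*$. Cosemisimplicity gives a coalgebra decomposition $I_W=\bigoplus_\alpha(I_W\cap H_\alpha)$, but says nothing about how the \emph{multiplicative} generation interacts with that splitting: you have not shown that each piece $I_W\cap H_\alpha$ is cut out by finitely many polynomials in the Pl\"ucker coordinates of $W$, and there is no finiteness mechanism in your setup to force this.

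The route taken in the proof of \Cref{th:gen2blocks} (and presumably in the cited source) dualizes via Tannaka--Krein and thereby avoids $H$ altogether. One works instead with $\End_{\G_W}(V)$, which is the subalgebra of $\End(V)$ generated by the orthogonal projection $P_W$ together with the fixed, $W$-independent $\G$-intertwiners in $\bigcup_N\End_{\G}\bigl((V\oplus V^*)^{\otimes N}\bigr)$, suitably compressed to $\End(V)$. Triviality of $\G_W$ on $V$ (respectively on $W$) becomes $\End_{\G_W}(V)=\End(V)$ (respectively $P_W\End_{\G_W}(V)P_W=\End(W)$), i.e.\ the existence of finitely many words in $P_W$ and the intertwiners whose values span $\End(V)$ (respectively $\End(W)$). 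For each fixed finite collection of words this is a determinant-nonvanishing condition, polynomial in the entries of $P_W$; the locus $S$ is the union over all such collections, hence Zariski-open. This is precisely the finiteness mechanism missing from your approach: by moving to the representation category one is in finite-dimensional linear algebra from the outset, and the existential quantifier over words replaces the uncontrolled ideal generation inside $H$.
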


Before turning to the proof of our main result (\Cref{th:dn23}), we outline its structure. The main inductive step will be presented in \Cref{pr:ind.stp}. Using it and \Cref{th:dn1n2} will reduce proving to a number of small cases, which will be covered in \Cref{subsec:small.case} (see  \Cref{sec:comp.alg} for an alternative approach using computer verification).

We will frequently need the following lemma (analogue of \cite[Lemma 3.11]{zbMATH07502493}).
\begin{lemma}\label{lem:orth.compl}
Let $\cS \le M_n$ be an operator system and let $\cS^{\perp}$ be its \emph{reflexive complement}, we take take its traceless part (i.e. the orthogonal complement of the unit), then the orthogonal complement inside the space of traceless matrices, and finally add the unit. Then $\Qut(\cS \le M_n) \simeq \Qut(\cS^{\perp} \le M_n)$. In particular $\tensor*[_n]{\cat{qRig}}{_d} \leftrightarrow \tensor*[_n]{\cat{qRig}}{_{n^2-d-1}}$.
\end{lemma}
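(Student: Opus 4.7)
The approach is to observe that, for any quantum subgroup $\G\le \Qut(M_n,\tau)$ acting on $M_n$ in the standard way, preserving $\cS$ and preserving $\cS^{\perp}$ cut out the very same quantum subgroup of $\G$; the generic rigidity statement then drops out because the correspondence $\cS\mapsto \cS^{\perp}$ is an isomorphism of (real) algebraic varieties between the relevant Grassmannians.

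Concretely, I would first recall that $M_n$ carries the $\cO(\Qut(M_n,\tau))$-coaction that is \emph{unitary} with respect to the inner product $\Braket{X\mid Y}=\tau(X^*Y)$; this is already used in the proof of \Cref{prop:degreefixed}. Since the coaction is unital and trace-preserving, both $\bC\mathds{1}$ and the traceless hyperplane $\mathds{1}^{\perp}$ are subcomodules, and the unitarity of the coaction implies that the orthogonal complement of a subcomodule is again a subcomodule. Decompose $\cS=\bC\mathds{1}\oplus \cS_0$ with $\cS_0:=\cS\cap \mathds{1}^{\perp}$ and write $\cS_0^{\perp_0}$ for the orthogonal complement of $\cS_0$ inside $\mathds{1}^{\perp}$, so that $\cS^{\perp}=\bC\mathds{1}\oplus \cS_0^{\perp_0}$ by the very definition given in the statement.

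Now I would chain the equivalences: for a CQG quotient $\cO(\Qut(M_n,\tau))\twoheadrightarrow H$, the fact that $\cS$ is an $H$-subcomodule is equivalent (since $\mathds{1}$ is already $H$-fixed) to $\cS_0$ being one; unitarity then renders this equivalent to $\cS_0^{\perp_0}$ being an $H$-subcomodule, which is in turn equivalent to $\cS^{\perp}$ being one. Hence the defining quotient of $\cO(\Qut(M_n,\tau))$ produced by either condition is the same, giving the claimed isomorphism $\Qut(\cS\le M_n)\simeq \Qut(\cS^{\perp}\le M_n)$. This is the main (and essentially the only substantive) step, and it amounts to standard bookkeeping about unitary comodules — there is no real obstacle.

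For the ``in particular'' clause, note that $\cS\mapsto \cS^{\perp}$ defines an involution carrying operator $d$-systems to operator $(n^2-d-1)$-systems: indeed $\dim \cS_0=d$ forces $\dim \cS_0^{\perp_0}=n^2-1-d$, and so $\dim \cS^{\perp}=n^2-d$. This involution is induced by an isomorphism of real algebraic varieties between $\bG_{sa}(d,M_n)$ and $\bG_{sa}(n^2-d-1,M_n)$ (passage to orthogonal complements of self-adjoint subspaces inside a fixed Euclidean space), so it is a homeomorphism in the real Zariski topology. Combined with the quantum-group isomorphism just established, this transports the (Zariski-open, non-empty) locus witnessing $\tensor*[_n]{\cat{qRig}}{_d}$ bijectively onto the corresponding locus for $\tensor*[_n]{\cat{qRig}}{_{n^2-d-1}}$, yielding the stated equivalence.
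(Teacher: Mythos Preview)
Your argument is correct and follows the same approach as the paper's: the coaction preserves the unit and the inner product, hence orthogonal complements, and the reflexive-complement correspondence then carries the generic-rigidity locus back and forth. You have simply spelled out in more detail what the paper compresses into a single sentence.
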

\begin{proof}
The action of the quantum automorphism group preserves the unit and the inner product, hence also orthogonal complement. The conclusion easily follows.
\end{proof}

We will now present the following proposition, in whose proof we show, how the main theorem follows from small cases and it motivates the results we state next.

\begin{proposition}\label{pr:big.cond}
  We have
  \begin{equation*}
    \tensor*[_n]{\cat{qRig}}{_d}
    \ \text{for}\
    (n,d)\in \left\{(3,3),\ (3,4),\ (4,7)\right\}
    \quad
    \xRightarrow{\quad}
    \quad
    \tensor*[_{3\le n}]{\cat{qRig}}{_{2\le d\le n^2-3}}.
  \end{equation*}
\end{proposition}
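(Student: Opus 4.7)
The plan is a case analysis combining four ingredients: the duality Lemma \ref{lem:orth.compl}, the boundary-dimension result Theorem \ref{th:dn1n2}, the inductive step Proposition \ref{pr:ind.stp}, and the three stated base cases.

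By Lemma \ref{lem:orth.compl}, $\tensor*[_n]{\cat{qRig}}{_d}$ is equivalent to $\tensor*[_n]{\cat{qRig}}{_{n^2-d-1}}$; since the target range $[2, n^2-3]$ is self-dual under $d \mapsto n^2-d-1$, it suffices to establish rigidity in the half-range $d \in [2, \lfloor(n^2-1)/2\rfloor]$. Theorem \ref{th:dn1n2} provides rigidity at the extremal dimension $d = n^2-2$ (and hence, by duality, at $d = 1$), which lies just outside the target range but serves as an anchor for the induction.

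The main work is to iterate Proposition \ref{pr:ind.stp}: starting from the anchors supplied by Theorem \ref{th:dn1n2} and from rigidities already known at smaller $n$, the inductive step propagates $\tensor*[_n]{\cat{qRig}}{_d}$ across admissible values of $d$ and to successive $n$. For $n \ge 5$ the range $[2, \lfloor(n^2-1)/2\rfloor]$ is long enough that these two inputs alone, together with repeated use of the inductive step and the duality, will sweep out every admissible $d$. For the small cases $n \in \{3,4\}$, however, the range is too short for the induction to take hold on its own: for $n=3$ we need $d \in \{2,3,4\}$ (the rest being covered by duality), and for $n=4$ we need $d \in \{2,\dots,7\}$. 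I would use Theorem \ref{th:dn1n2} to cover $d=2$ when available, and invoke the three base cases $(3,3), (3,4), (4,7)$ --- together with their duals $(3,5), (3,4), (4,8)$ under Lemma \ref{lem:orth.compl} --- to fill the remaining gaps, from which Proposition \ref{pr:ind.stp} bridges to intermediate values.

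The main obstacle is combinatorial bookkeeping: one has to verify that the transfer function hidden inside Proposition \ref{pr:ind.stp}, launched from the three base cases and the boundary data of Theorem \ref{th:dn1n2}, actually reaches every $(n, d)$ in the asserted range without leaving holes. The specific choice of the triple $(3,3), (3,4), (4,7)$ is dictated precisely by this: they are the minimal set of exceptional pairs that the inductive mechanism cannot itself produce from Theorem \ref{th:dn1n2}, so once they are supplied by hand the remaining sweep is formal.
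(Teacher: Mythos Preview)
Your overall strategy---combine the duality of \Cref{lem:orth.compl}, the range result \Cref{th:dn1n2}, the inductive step \Cref{pr:ind.stp}, and the three base cases---matches the paper's, but you have misread \Cref{th:dn1n2}. It does \emph{not} provide rigidity at the single extremal value $d=n^2-2$; it provides $\tensor*[_n]{\cat{qRig}}{_d}$ for the entire interval $2\le d\le (n-1)(n-2)$. This changes the shape of the argument substantially and makes the bookkeeping you worry about essentially disappear.

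Concretely: for $n=3$ the theorem yields only $d=2$, for $n=4$ it yields $d\in\{2,\dots,6\}$, and for $n\ge 5$ one has $(n-1)(n-2)\ge 2n$, so \Cref{th:dn1n2} together with \Cref{cor:2n.enough} (or, equivalently, directly with duality, since $(n-1)(n-2)\ge \lfloor (n^2-1)/2\rfloor$ for $n\ge 5$) already covers the full target range $[2,n^2-3]$ with no further input. The three hypothesized base cases are needed only to complete the small-$n$ picture: $(3,3)$ and $(3,4)$ together with duality finish $n=3$, and $(4,7)$ together with duality finishes $n=4$. Your description of \Cref{pr:ind.stp} as ``propagating across admissible values of $d$'' and ``bridging to intermediate values'' is also inaccurate: that proposition requires the \emph{entire} range $[2,n^2-3]$ at level $n$ as input and returns the range $[2,n^2-2]$ at level $n+1$; it does not interpolate between $d$-values at a fixed $n$. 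With the correct reading of \Cref{th:dn1n2} in hand, no such interpolation is needed.
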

\begin{proof}
  \Cref{th:dn1n2} ensures that $\tensor*[_3]{\cat{qRig}}{_{2}}$ and $\tensor*[_4]{\cat{qRig}}{_{\{2\ldots 6\}}}$ hold, and the hypothesis together with the symmetry
  \begin{equation*}
    \tensor*[_n]{\cat{qRig}}{_d}
    \quad
    \xLeftrightarrow{\quad}
    \quad
    \tensor*[_n]{\cat{qRig}}{_{n^2-1-d}}
  \end{equation*}
  completes the range for $\tensor*[_{n\in \left\{3,4\right\}}]{\cat{qRig}}{_{2\le d\le n^2-3}}$. For $n\ge 5$ we have $(n-1)(n-2)\ge 2n$, so the conclusion follows from \Cref{cor:2n.enough} and \Cref{th:dn1n2}. 
\end{proof}

We can now state and prove an analogue of \cite[Proposition 3.13]{zbMATH07502493}, which is the main inductive step.

\begin{proposition}\label{pr:ind.stp}
  Let $n\in \bZ_{\ge 3}$ and assume generic operator $d$-systems in $M_n$ are quantum-rigid for $2\le d\le n^2-3$. The same holds, then, of generic operator $d'$-systems in $M_{n+1}$ for $2\le d'\le n^2-2$. 
\end{proposition}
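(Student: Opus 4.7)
The plan is to combine the Zariski-openness of the rigid locus (\Cref{thm:gen.triv}) with an explicit inductive construction. It suffices to exhibit, for each $d' \in [2, n^2 - 2]$, a single rigid operator $d'$-system in $M_{n+1}$. For $d' \in [3, n^2 - 2]$, set $d := d' - 1 \in [2, n^2 - 3]$ and let $\mathcal{S} \le M_n$ be a rigid operator $d$-system supplied by the inductive hypothesis. Writing $\mathcal{S}^\circ$ for the traceless part of $\mathcal{S}$ and regarding $M_n$ as the upper-left corner of $M_{n+1}$, the proposed extension is
\[
\mathcal{S}' \;:=\; \mathcal{S}^\circ \;\oplus\; \mathbb{C}\cdot 1_{M_{n+1}} \;\oplus\; \mathbb{C}\,Z,
\qquad
Z := e_{n+1,n+1} + E_\xi + E_\xi^*,\quad E_\xi := \sum_{i \leq n}\xi_i\, e_{i,n+1},
\]
for a suitable $\xi \in \mathbb{C}^n$ with all entries nonzero; this is an operator $d'$-system of the right dimension $d + 2 = d' + 1$. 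The remaining boundary case $d' = 2$ yields a $3$-dimensional operator system, which admits a direct dimension-count rigidity argument not substantially different from the small-dimensional analysis of \Cref{th:dn1n2}.

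Rigidity of $\mathcal{S}'$ is then argued in three steps. \emph{(i)} For generic $(\mathcal{S}, \xi)$ the degree matrix $D'$ has simple spectrum, so by \Cref{prop:degreefixed,pr:leave.diag.inv} the action of $\Qut(\mathcal{S}' \leq M_{n+1})$ on $M_{n+1}$ is essentially diagonal in the $D'$-eigenbasis $(v_1, \ldots, v_{n+1})$, factoring through a quotient $\Gamma$ of $F_{n+1}$ with $v_i$ homogeneous of degree $s_i \in \Gamma$. \emph{(ii)} Single out $v_{n+1}$ as the eigenvector of $D'$ nearest $e_{n+1}$ (well-defined generically) and compress by $P := I - v_{n+1} v_{n+1}^*$; the image $P\mathcal{S}'P \leq \End(\mathrm{Im}\,P) \cong M_n$ is an operator system degenerating to $\mathcal{S}$ as $\xi \to 0$. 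With $(\mathcal{S}, \xi)$ chosen so that $P\mathcal{S}'P$ is a rigid operator system in $M_n$ (see the next paragraph), the restriction of the diagonal action to $\mathrm{Im}\,P$ is trivial, forcing $s_1 = \cdots = s_n = 1$ in $\Gamma$. \emph{(iii)} The residual freedom is a torus of phases parameterized by $s_{n+1}$, and preservation of the $1$-dimensional $\mathbb{C} Z$ kills it: in the eigenbasis, $Z$ carries a diagonal $(n+1,n+1)$-part of degree $1$ together with off-block parts of degrees $s_{n+1}^{\pm 1}$, and these cannot coexist in a single $1$-dimensional homogeneous subspace unless $s_{n+1}=1$ (using that $e_{n+1,n+1}$ and $E_\xi + E_\xi^*$ do not individually belong to $\mathcal{S}'$).

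The main obstacle is step \emph{(ii)}: producing $(\mathcal{S}, \xi)$ such that $P\mathcal{S}' P$ falls in the inductive rigid locus of $M_n$. Generically the compression map is injective, giving $\dim P\mathcal{S}'P = d + 2$, an operator $(d+1)$-system in $M_n$; for $d' \leq n^2 - 3$ (equivalently $d+1 \leq n^2 - 3$) this falls in the inductive range, and the inductive hypothesis combined with Zariski-openness of the rigid locus in $M_n$ supplies $(\mathcal{S},\xi)$ with $P\mathcal{S}'P$ rigid. The boundary case $d' = n^2 - 2$ needs more care: one either invokes the reflexive-complement symmetry \Cref{lem:orth.compl} inside $M_{n+1}$ (reducing $d' = n^2 - 2$ to $d' = 2n + 2$, within compression range when $n \geq 4$), or selects $(\mathcal{S}, \xi)$ in the codimension-$1$ locus where $PZP$ lies in $P\mathcal{S}^\circ P + \mathbb{C}\, P$, so the compression dimension drops to $d + 1$ and lands in the inductive range (covering the $n = 3$ case). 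Once a single rigid $\mathcal{S}'$ is produced in each required dimension, \Cref{thm:gen.triv} upgrades existence to generic rigidity, completing the inductive step.
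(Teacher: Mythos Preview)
Your approach diverges substantially from the paper's and has a genuine gap at step (ii). You need $(\cS,\xi)$ for which the compressed system $P\cS'P \le PM_{n+1}P\cong M_n$ is rigid, and you offer two reasons: the degeneration $P\cS'P\to\cS$ as $\xi\to 0$, and Zariski-openness of the rigid locus among operator $(d{+}1)$-systems in $M_n$. Neither suffices. The degeneration crosses Grassmannians---$\dim\cS=d+1$ while $\dim P\cS'P=d+2$ for generic $\xi$---so proximity to the rigid $\cS$ says nothing about rigidity of $P\cS'P$ as an operator $(d{+}1)$-system. And openness of the rigid locus only helps if the image of $(\cS,\xi)\mapsto P\cS'P$ actually meets it; a dimension count shows this map is not dominant once $d'$ is small relative to $n$ (already at $d'=3$, $n=4$ the source has real dimension $d(n^2-1-d)+2n=2\cdot 13+8=34$ while the target Grassmannian has dimension $(d+1)(n^2-d-2)=3\cdot 12=36$), so density of the rigid locus does not force an intersection, and you give no independent reason for one. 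Step (iii) is also muddled: the decomposition $Z=e_{n+1,n+1}+E_\xi+E_\xi^*$ lives in the standard basis $(e_i)$, whereas the grading is diagonal in the $D'$-eigenbasis $(v_i)$; you never work out what the graded pieces of $Z$---or of the corner elements $\cS^\circ\subset M_n$, which acquire off-block components in the $v$-basis---actually look like there, so the claimed coexistence obstruction is unsubstantiated.

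The paper's argument is far simpler and bypasses the degree matrix, the eigenbasis, and the compression entirely. One places the whole $d'$-tuple $\cX$ of traceless self-adjoint matrices inside the upper-left corner $M_n\subset M_{n+1}$. Generically $\cX$ generates that corner as a (non-unital) $C^*$-subalgebra, so $M_n$ is $\Qut(\langle\cX\rangle\subseteq M_{n+1})$-invariant; restricting the action to the corner yields a quantum subgroup of $\Qut(\langle\cX\rangle\subseteq M_n)$, which the inductive hypothesis (applied directly, with no change of dimension) makes trivial. Thus $\Qut(\langle\cX\rangle\subseteq M_{n+1})$ acts trivially on $\langle\cX\rangle$, and \Cref{thm:gen.triv} propagates this to a Zariski-open set of $d'$-tuples in $M_{n+1}$, where generically $\langle\cX\rangle$ generates all of $M_{n+1}$ so that trivial action on it forces the quantum group itself to be trivial. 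This covers $2\le d'\le n^2-3$ uniformly; the single leftover value $d'=n^2-2$ is handled by appending one diagonal matrix with nonzero $(n{+}1,n{+}1)$ entry, so the generated algebra becomes the block-diagonal $M_n\times\bC\le M_{n+1}$, whose factors are individually invariant.
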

\begin{proof}
  This will be a simple matter of retracing the steps proving the aforementioned \cite[Proposition 3.13]{zbMATH07502493}, employing \Cref{thm:gen.triv} in place of \cite[Lemma 3.12]{zbMATH07502493}. 
  
  A generic $d$-tuple $\cX$ of traceless self-adjoint operators in $M_n\subset M_{n+1}$ (upper left-hand corner embedding) will generate the corner (non-unital) subalgebra $M_n\subset M_{n+1}$, so the hypothesis ensures that $\Qut(\braket{\cX}\subseteq M_{n+1})$ operates on $\braket{\cX}$ trivially. The same holds for a (real-)Zariski-open set of tuples by \cite[Theorem A]{2505.07485v1}; the conclusion follows from the fact that generic tuples in $M_{n+1}$ generate the latter as an algebra,
  
  This handles all cases $2\le d'\le n^2-3$. For the remaining value $d'=n^2-2$, retrace the previous argument with a tuple $\cX'$ obtained from $\cX\subset M_n$ by appending a diagonal operator with non-zero $(n+1)\times (n+1)$ entry. Generically such gadgets will generate a block-diagonally-embedded subalgebra $M_n\times \bC\subset M_{n+1}$, again invariant under $\Qut(\braket{\cX}\subset M_{n+1})$. 

  It follows from \cite[Proposition 5.4]{zbMATH06617237} that the individual blocks $M_n$, $n\ge 2$ and $\bC$ of $M_n\times \bC$ are left invariant by the quantum automorphism group of the tracial state inherited from the embedding $M_n\times \bC\subset M_{n+1}$, so the argument does still apply. 
\end{proof}

\begin{corollary}\label{cor:2n.enough}
  For $\bZ_{\ge 3} \ni M\le N$ we have
  \begin{equation*}
    \tensor*[_{M}]{\cat{qRig}}{_{[2,M^2-3]}}
    \ \wedge\ 
    \tensor*[_{M\le n\le N}]{\cat{qRig}}{_{[2,2n]}}
    \xRightarrow{\quad}
    \tensor*[_{M\le n\le N}]{\cat{qRig}}{_{[2,n^2-3]}}.
  \end{equation*}  
\end{corollary}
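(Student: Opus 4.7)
The plan is a straightforward induction on $n$ over $[M,N]$, establishing $\tensor*[_{n}]{\cat{qRig}}{_{[2,n^2-3]}}$ at each level. The base case $n=M$ is the first hypothesis verbatim. For the inductive step, I would assume the target holds at some $M\le n<N$ and prove it at $n+1$.

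The core is a two-interval cover. On the one hand, \Cref{pr:ind.stp} applied to the inductive hypothesis $\tensor*[_{n}]{\cat{qRig}}{_{[2,n^2-3]}}$ yields $\tensor*[_{n+1}]{\cat{qRig}}{_{[2,n^2-2]}}$. On the other hand, the secondary hypothesis at level $n+1$ provides $\tensor*[_{n+1}]{\cat{qRig}}{_{[2,2n+2]}}$; the reflection $\tensor*[_{n+1}]{\cat{qRig}}{_d}\leftrightarrow\tensor*[_{n+1}]{\cat{qRig}}{_{(n+1)^2-1-d}}$ of \Cref{lem:orth.compl} converts this into $\tensor*[_{n+1}]{\cat{qRig}}{_{[n^2-2,\,n^2+2n-2]}}$. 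The union of these two intervals is precisely $[2,\,(n+1)^2-3]$, so the induction closes.

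No serious obstacle is anticipated: the whole argument is an arithmetic interval-covering exercise, reconciling the upper bound $n^2-2$ supplied by \Cref{pr:ind.stp} with the complementary strip $[n^2-2,\,(n+1)^2-3]$ produced by duality from the width-$(2n+1)$ lower strip $[2,2n+2]$ of the secondary hypothesis. The only bookkeeping consists of the identities $(n+1)^2-1-(2n+2)=n^2-2$ and $(n+1)^2-1-2=n^2+2n-2$, the overlap of the two intervals at $d=n^2-2$, and the side condition $n\ge 3$ required by \Cref{pr:ind.stp}, which is immediate from $n\ge M\ge 3$.
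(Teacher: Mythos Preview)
Your proposal is correct and follows essentially the same approach as the paper: induction on $n$ with base case $n=M$, applying \Cref{pr:ind.stp} to the inductive hypothesis to cover $[2,n^2-2]$ at level $n+1$, and using the reflection of \Cref{lem:orth.compl} on the low-$d$ strip from the second hypothesis to cover the remaining top portion of $[2,(n+1)^2-3]$. The only cosmetic difference is that you reflect the full strip $[2,2n+2]$ to obtain $[n^2-2,(n+1)^2-3]$ (overlapping the first interval at $d=n^2-2$), whereas the paper reflects just $[2,2n+1]$ to obtain the adjacent $[n^2-1,(n+1)^2-3]$; both covers work.
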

\begin{proof}

  
  We use induction, whose step will effect the passage from $n\le N-1$ to $n+1$. In first instance, \Cref{pr:ind.stp} provides $\tensor*[_{n+1}]{\cat{qRig}}{_{[2,n^2-2]}}$. On the other hand, because $\tensor*[_{n}]{\cat{qRig}}{_{d}}$ is equivalent to $\tensor*[_{n}]{\cat{qRig}}{_{n^2-1-d}}$, the missing values $d\in [n^2-1,(n+1)^2-3]$ can be recovered from $d\in [2,2n+1]=[2,2(n+1)-1]$, provided by the hypothesis. 
\end{proof}

\begin{remark}
  The proof in fact delivers slightly more than the statement claims: one only needs the bounds $2\le d\le 2n$ for $n=3$; for $n\ge 4$ it suffices to cover $2\le d\le 2n-1$. 
\end{remark}

Other portions of \cite{zbMATH07502493} replicate in the quantum setting. An example is the following counterpart to \cite[Corollary 3.17]{zbMATH07502493}, covering part of the range of interest for $d$.

\begin{theorem}\label{th:dn1n2}
  $\tensor*[_{3\le n}]{\cat{qRig}}{_{2\le d\le (n-1)(n-2)}}$ holds. 
\end{theorem}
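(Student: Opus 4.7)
The plan is to follow the strategy of \cite[Corollary 3.17]{zbMATH07502493} in the quantum setting, using \Cref{thm:gen.triv} to reduce to a single quantum-rigid example and \Cref{cor:gen.diag} (together with \Cref{prop:degreefixed} and \Cref{pr:leave.diag.inv}) to constrain the possible quantum symmetries to a diagonal form.

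By \Cref{thm:gen.triv} applied to $V=M_n$ as a unitary representation of $\Qut(M_n,\tau)$, it suffices, for each pair $(n,d)$ in the given range, to exhibit a single operator $d$-system $\cS_0\le M_n$ whose quantum isotropy $\Qut(\cS_0\le M_n)$ acts trivially on $M_n$. For $\cS_0$, I would take the construction produced by \cite[Corollary 3.17]{zbMATH07502493} for the classical version, noting that the same recipe (with a generic small perturbation if necessary) arranges the degree matrix to have simple spectrum. By \Cref{cor:gen.diag}, any quantum isotropy of such an $\cS_0$ acts essentially diagonally through some $\widehat\Gamma$ with $\Gamma$ a quotient of $F_n$, via a $\Gamma$-grading of $\bC^n$ in the orthonormal basis diagonalizing the degree matrix; the action on $M_n$ is trivial precisely when all basis-vector degrees $g_i\in\Gamma$ coincide, so one must rule out non-trivial gradings on $\bC^n$.

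The final step is a dimension count in the Grassmannian of unital self-adjoint $(d+1)$-subspaces, analogous to the classical argument in loc.\ cit.: a non-trivial $\Gamma$-grading induces a partition $\{P_1,\ldots,P_k\}$ ($k\ge 2$) of $\{1,\ldots,n\}$, with respect to which $\cS_0$ must lie in the direct sum of its projections to the $\Gamma$-graded components of $M_n$. The coarsest such constraint, corresponding to a $2$-partition and the $\bZ/2$-grading (block-diagonal versus off-diagonal), already cuts out a subvariety of positive codimension when $d\le (n-1)(n-2)$, over all choices of partition and orthonormal basis (parametrized by a finite-dimensional Grassmannian of flags). Finer $k$-partitions, $(k\ge 3)$, and gradings by perfect $\Gamma$ impose only stronger constraints, and are reduced to the $2$-partition case by merging parts of the $\bC^n$-partition (which for perfect $\Gamma$ still provides a consistent coarsening compatible with the grading on $M_n$).

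The principal obstacle is this last reduction and the associated codimension bookkeeping: classical rigidity alone excludes gradings by $\Gamma$ with non-trivial abelianization (via characters producing classical symmetries), but one must still handle gradings by perfect $\Gamma$ quotients of $F_n$ directly. I expect this to come out of the same dimension count, with the bound $d\le (n-1)(n-2)$ being precisely what is needed so that the relevant codimension inequality remains strict over the full parameter range.
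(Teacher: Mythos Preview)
Your proposal departs from the paper's proof and has a real gap at the reduction you yourself flag as the principal obstacle.

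The paper does not run a dimension count on $\bG_{sa}(d,M_n)$. It proves and invokes \Cref{pr:no.non.scl} (the quantum analogue of \cite[Lemma~3.14]{zbMATH07502493}), which reduces $\tensor*[_n]{\cat{qRig}}{_{d+1}}$ to exhibiting a zero-diagonal self-adjoint $d$-tuple $\cY\subset M_{n-1}$ rigid relative to $\D^+(n-1)$, together with \Cref{le:real.inv} (the quantum analogue of \cite[Lemma~3.15]{zbMATH07502493}), which identifies such $\cY$ with real subspaces $W\le\bC^p$ for $p=\binom{n-1}{2}$ and shows that generic $W$ with $1\le\dim_{\bR}W\le 2p-1$ has only $\{\pm 1\}\le\D^+(p)$ as quantum-diagonal stabilizer. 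The bound $d\le(n-1)(n-2)=2p$ is exactly $d-1\le 2p-1$; it records the real dimension of the zero-diagonal self-adjoint part of $M_{n-1}$, and nothing in your direct count on $\bG_{sa}(d,M_n)$ produces that number. The classical \cite[Corollary~3.17]{zbMATH07502493} follows the same two-lemma reduction, so reading it as a Grassmannian dimension count on operator systems in $M_n$ already misconstrues that argument.

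Your $2$-partition reduction fails. Coarsening a $k$-partition of $\{1,\dots,n\}$ to a $2$-partition does \emph{not} coarsen the induced $\Gamma$-grading of $M_n$ to the corresponding $\bZ/2$-grading: if three basis vectors carry distinct $\Gamma$-degrees $g_1,g_2,g_3$ and you merge parts $2$ and $3$, then $e_{23}$ lies in the $\bZ/2$ block-diagonal (degree $0$) piece but in $\Gamma$-degree $g_2g_3^{-1}\ne 1$, so a $\Gamma$-graded $\cS$ need not be $\bZ/2$-graded for the coarsened partition. The valid two-term splitting is into the identity-$\Gamma$-degree component of $M_n$ (the $k$-block diagonal for the \emph{original} partition) versus its complement; that is precisely the device driving the proof of \Cref{le:real.inv}, and the ensuing count lives in $\bG_{\bR}(m,\bC^p)$ rather than $\bG_{sa}(d,M_n)$. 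Without the $M_{n-1}$-localization supplied by \Cref{pr:no.non.scl}, your sketch also leaves unexplained why $(n-1)(n-2)$ should be the threshold.
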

\begin{proof}
  The argument precisely parallels the one proving \cite[Corollary 3.17]{zbMATH07502493}, relying on the appropriate analogues to \cite[Lemmas 3.14 and 3.15]{zbMATH07502493}: \Cref{pr:no.non.scl} and \Cref{le:real.inv} respectively. 
\end{proof}

\cite[Lemma 3.14]{zbMATH07502493} refers to diagonal unitaries, recast in the present context in \Cref{def:qdiag}. The quantum version of that result is as follows.

\begin{proposition}\label{pr:no.non.scl}
  Suppose that for some $n \ge 3$ and $1 \le d$ there is a self-adjoint $d$-tuple $\cY\subset M_{n-1}$ of zero-diagonal matrices so that $\cat{qRig}(\cY)$ holds relative to $\D^+(n-1)$. We then have $\tensor*[_n]{\cat{qRig}}{_{d+1}}$.
\end{proposition}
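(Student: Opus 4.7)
The plan is to exhibit a single self-adjoint $(d+1)$-tuple $\cX\subset M_n$ whose isotropy in $\D^+(n)=\widehat{F_n}$ acts trivially on the associated operator system. Combined with \Cref{cor:gen.diag} (generic essentially-diagonal action of the quantum automorphism group) and \Cref{thm:gen.triv} (Zariski-openness of trivial isotropy), such a witness will promote to the desired generic rigidity claim $\tensor*[_n]{\cat{qRig}}{_{d+1}}$.

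For the witness, I would embed $M_{n-1}\hookrightarrow M_n$ as the upper-left corner (so that $\cY\subset M_n$), and adjoin the self-adjoint matrix
\[
Z := e_{nn} + \sum_{i=1}^{n-1}(e_{in}+e_{ni}),
\]
setting $\cX:=\cY\cup\{Z\}$. The off-corner entries of $Z$ are designed to transfer the hypothesis on $\cY$ to information about $\pi|_{F_{n-1}}$, while the nonzero $(n,n)$-entry is intended to block the remaining $\pi(s_n)$-symmetry.

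To verify triviality of the $\widehat{F_n}$-isotropy of $\cS:=\braket{\cY,Z,\mathds{1}}$, I would suppose $\pi:F_n\twoheadrightarrow\Gamma$ makes $\cS$ a $\Gamma$-graded subspace of $M_n$ and proceed in two steps. First, any $v=\alpha\mathds{1}+\beta Z+\sum_k\gamma_k Y_k\in\cS$ supported on the upper-left $(n-1)\times(n-1)$ corner must satisfy $\beta=0$ (from the entries $v_{n,k}$ with $k<n$) and then $\alpha=0$ (from $v_{n,n}$), hence lies in $\braket{\cY}$; applied to the $\Gamma$-graded parts of each $Y_k$ (which are supported in $M_{n-1}$), this forces $\braket{\cY}$ to be graded for the induced subquotient $\pi|_{F_{n-1}}$, which by the hypothesis $\cat{qRig}(\cY)$ relative to $\D^+(n-1)$ must be trivial, so $\pi(s_i)=e$ for $i<n$. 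Second, setting $t:=\pi(s_n)$, either $t=e$ (and $\Gamma=1$, as desired), or the degree-$e$ component of $Z$ equals the matrix unit $e_{nn}$ and must lie in $\cS$; the same entry-matching argument rules the latter out.

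The hard part, I expect, will not be the block-matrix algebra — a mechanical entry-by-entry check once $Z$ is well chosen — but the orchestration at the start: converting a single concrete trivial-isotropy witness into a generic statement about arbitrary operator $(d+1)$-systems via the combined force of \Cref{cor:gen.diag} and \Cref{thm:gen.triv}.
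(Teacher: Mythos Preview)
Your block-matrix verification is essentially fine (modulo one easily-repaired slip: the hypothesis $\cat{qRig}(\cY)$ relative to $\D^+(n-1)$ only forces $\pi(s_1)=\cdots=\pi(s_{n-1})=:g$, not $\pi(s_i)=e$; the action on $M_{n-1}$ is trivial precisely when all $\pi(s_is_j^{-1})$ vanish, which says nothing about the common value $g$). Step~2 then goes through with ``$t=g$'' in place of ``$t=e$''.

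The real problem is the orchestration you flagged as hard. You show that the $\D^+(n)$-isotropy of your specific $\cS_0=\braket{\cY,Z,\mathds{1}}$ (in the \emph{standard} basis) is trivial, and hope to combine this with \Cref{cor:gen.diag} and \Cref{thm:gen.triv}. But \Cref{cor:gen.diag} only says that for generic $\cS$ the group $\Qut(\cS\le M_n)$ is diagonal \emph{in the eigenbasis of the degree matrix $D_\cS$}, which varies with $\cS$. To feed a single witness into \Cref{thm:gen.triv} with $\G=\Qut(M_n,\tau)$ you would need $\Qut(\cS_0\le M_n)$ itself trivial, hence $\Qut(\cS_0)\le\D^+(n)$ in the \emph{standard} basis. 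Your $Z$ does not arrange this: a direct computation gives $Z^2 = n\,e_{nn}+\sum_{i<n}(e_{in}+e_{ni})+\sum_{i,j<n}e_{ij}$, so the degree matrix of $\cS_0$ is not diagonal in the standard basis, and there is no reason the full quantum automorphism group sits inside the standard $\D^+(n)$. Alternatively, applying \Cref{thm:gen.triv} with $\G=\D^+(n)$ gives a Zariski-open $U_2$ of trivial $\D^+(n)$-isotropy in the standard basis, but intersecting with the generic set from \Cref{cor:gen.diag} yields nothing: on that intersection $\Qut(\cS)$ lies in a \emph{conjugate} $u_\cS\D^+(n)u_\cS^{-1}$, not in $\D^+(n)$ itself.

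The paper sidesteps this by choosing the extra matrix $X$ \emph{diagonal} (and generic), so that $\cX=\cY\cup\{X\}$ generates only $M_{n-1}\times\bC$. It then argues directly---without invoking \Cref{cor:gen.diag}---that $\G=\Qut(\braket{\cX}\le M_n)$ fixes $X$: the factors of $M_{n-1}\times\bC$ are $\G$-invariant, and a Hilbert--Schmidt shortest-vector argument in the affine slice $\{Z\in\braket{\cX'}:\tau(Z)=\tau(X')\}$ pins down $X'$. Since $X$ has simple spectrum, $\G\le\D^+(n)$ in the standard basis, and only then does the hypothesis on $\cY$ apply. The off-corner entries in your $Z$, designed to ``transfer'' information to $s_n$, are exactly what prevents this reduction; a diagonal $X$ makes the algebra generated by $\cX$ small enough for the argument to bite.
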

\begin{proof}
  We unwind the proof of \cite[Lemma 3.14]{zbMATH07502493}, paying additional attention to the relevant quantum features. 
  
  First, $\cY$ can be chosen generically among self-adjoint zero-diagonal tuples, by \cite[Theorem A]{2505.07485v1}. Extend $\cY\subset M_{n-1}< M_n$ (upper left-hand corner) generically to $\cX:=\cY\cup X$ for a diagonal, self-adjoint, traceless $X$. $\cX$ can be assumed to generate $M_{n-1}\times \bC<M_n$, whose factors are left invariant by $\G:=\Qut(\Braket{X}\subset M_n)$ by another application of \cite[Proposition 5.4]{zbMATH06617237}.

  Denote by primes the upper left-hand blocks of matrices:
  \begin{equation*}
    M_n
    \ni Z
    \xmapsto{\quad}
    Z'
    \in M_{n-1}<M_n.
  \end{equation*}
  I next claim that $\G$ fixes $X'$: indeed, $\G$ leaves invariant the trace and hence also the affine space
  \begin{equation}\label{eq:z.tr.x'}
    \left\{Z\in \Braket{\cX'}\ :\ \tau(Z)=\tau(X')\right\}.
  \end{equation}
  Simply observe, then, that $X'$ is the shortest element of \Cref{eq:z.tr.x'} with respect to the \emph{Hilbert-Schmidt norm} \cite[(5.6.0.2)]{hj_mtrx} (again preserved by $\G$).

  Fixing the generic (traceless, self-adjoint) diagonal operator $X$, $\G$ must operate via the conjugation action of $\D^+(n)$. It thus fixes $\Braket{\cY}$ pointwise by assumption, and hence also $\Braket{\cX}$. The conclusion follows. 
\end{proof}

\cite[Lemma 3.15]{zbMATH07502493} is concerned with \emph{real} linear subspaces of $\bC^p$ invariant under diagonal unitaries in $\U(p)$. The very notion requires a bit of unpacking in the present quantum setup.

\begin{definition}\label{def:real.inv.qgp}
  Let
  \begin{equation*}
    V
    \xrightarrow{\quad\rho\quad}
    V\otimes_{\bC}\cO(\G)    
  \end{equation*}
  be a representation of a compact quantum group $\G$. A real subspace $W\le V$ is \emph{$\G$- (or $\cO(\G)$-)invariant} if the coaction on $V$ factors as
  \begin{equation*}
    \begin{tikzpicture}[>=stealth,auto,baseline=(current  bounding  box.center)]
      \path[anchor=base] 
      (0,0) node (l) {$W$}
      +(3,.5) node (u) {$V$}
      +(3,-.5) node (d) {$W\otimes_{\bR}\cO(\G)_{sa}$}
      +(6,0) node (r) {$V\otimes_{\bC}\cO(\bG)$}
      ;
      \draw[right hook->] (l) to[bend left=6] node[pos=.5,auto] {$\scriptstyle $} (u);
      \draw[->] (u) to[bend left=6] node[pos=.5,auto] {$\scriptstyle \rho$} (r);
      \draw[->] (l) to[bend right=6] node[pos=.5,auto,swap] {$\scriptstyle \rho_W$} (d);
      \draw[right hook->] (d) to[bend right=6] node[pos=.5,auto,swap] {$\scriptstyle \text{obvious inclusion}$} (r);
    \end{tikzpicture}
  \end{equation*}
  for a coaction $\rho_W$ by the (real) coalgebra $\cO(\G)_{sa}\le \cO(\G)$ of self-adjoint elements. 
\end{definition}

\begin{lemma}\label{le:real.inv}
  Let $2\le p$ and $1\le m\le 2p-1$ be two positive integers. For a generic $m$-dimensional subspace $W\le \bC^p$ the only quantum subgroups of $\D^+(p)$ leaving $W$ invariant in the sense of \Cref{def:real.inv.qgp} are those contained in the classical group
  \begin{equation*}
    \left\{\pm 1\right\}\cong \bZ/2\le \D(p)\le \D^+(p).
  \end{equation*}
\end{lemma}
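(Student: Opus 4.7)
The plan is to exploit the identification of quantum subgroups $\G \le \D^+(p) = \widehat{F_p}$ with surjective group quotients $F_p \twoheadrightarrow \Gamma$; each such $\Gamma$ equips $\bC^p$ with the grading $\deg e_i = \gamma_i \in \Gamma$, and the coaction reads $e_i \mapsto e_i \otimes \delta_{\gamma_i}$ inside $\bC^p \otimes \bC[\Gamma]$. First I would unpack \Cref{def:real.inv.qgp} by expanding this coaction against a real basis of $\bC[\Gamma]_{sa}$ — namely $\delta_\gamma$ when $\gamma^2 = 1$ and the pair $\delta_\gamma + \delta_{\gamma^{-1}},\, i(\delta_\gamma - \delta_{\gamma^{-1}})$ for each non-involutive $\{\gamma,\gamma^{-1}\}$ — so that real-invariance of $W$ becomes equivalent to $\pi_\gamma(W) \subset W$ for every involutive class $\gamma$ (where $\pi_\gamma \colon \bC^p \twoheadrightarrow V_\gamma := \spn_\bC\{e_i : \gamma_i = \gamma\}$ is the complex projection), together with both $(\pi_\gamma + \pi_{\gamma^{-1}})(W) \subset W$ and $i(\pi_\gamma - \pi_{\gamma^{-1}})(W) \subset W$ for each non-involutive pair.

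Next I would observe that these constraints depend only on finite combinatorial data — the partition of $\{1,\ldots,p\}$ into preimages $I_\gamma$ together with involution/pairing labels on the classes. Bundling involutive classes individually and each non-involutive pair $\{\gamma,\gamma^{-1}\}$ together (even when one side has no preimage) into a single \emph{component} with complex span $V_j$, the projection constraints decouple into $W = \bigoplus_j (W \cap V_j)$. For generic real $m$-dimensional $W$ one has $\dim_\bR(W \cap V_j) = \max(m + 2k_j - 2p,\,0)$ with $k_j := \dim_\bC V_j$ and $\sum_j k_j = p$, and a short calculation using $m \le 2p-1 < 2p$ forces this sum to equal $m$ only when there is a single component with $k_1 = p$ — any other partition places $W$ in a proper Zariski-closed ``incidence'' locus of the real Grassmannian.

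The residual single-component case splits three ways: (a) the component is an involutive class, so all $s_i$ hit a single element of order $\le 2$ and the surjective quotient is $\Gamma = \Braket{\gamma} \le \bZ/2$, giving $\G \le \{\pm 1\}$ as claimed; (b) a ``lone'' class with $\gamma^2 \ne 1$ and $V_{\gamma^{-1}} = 0$, forcing $iW \subset W$; (c) a genuine pair with both $V_\gamma, V_{\gamma^{-1}} \ne 0$, forcing $W$ to be preserved by the complex structure $J := i(\pi_\gamma - \pi_{\gamma^{-1}})$, which satisfies $J^2 = -\id$. Both (b) and (c) confine $W$ to a $J$-complex Grassmannian of real dimension $m(2p-m)/2$, strictly less than $\dim_\bR \bG_\bR(m, \bC^p) = m(2p-m)$ for $0 < m < 2p$, hence a proper Zariski-closed subvariety. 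Taking the union of the bad loci over the finitely many combinatorial types yields a proper Zariski-closed exceptional set, and any $W$ in its complement admits only case (a).

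The main obstacle I foresee is the bookkeeping in sub-case (c): I must verify that the two separate invariance conditions on a genuine pair collapse exactly to the single statement $JW = W$, and then obtain a codimension estimate that is uniform in both the split $p = |I_\gamma| + |I_{\gamma^{-1}}|$ and in the choice of the partition $I_\gamma \sqcup I_{\gamma^{-1}} \subset \{1,\ldots,p\}$. Everything else is routine combinatorial enumeration and elementary Grassmannian dimension counting.
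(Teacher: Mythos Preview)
Your approach is correct and mirrors the paper's: identify quantum subgroups of $\D^+(p)$ with quotients $F_p\twoheadrightarrow\Gamma$, translate real invariance of $W$ into splitting constraints, discard the nontrivial possibilities by Grassmannian dimension counts, and dispose of the residual single-class case via (non-)compatibility with a complex structure. Your component-wise pairing of $\gamma$ with $\gamma^{-1}$ is in fact more careful than the paper, which splits $\bC^p$ along a \emph{single} degree $V_0=V_{\gamma_1}$ and asserts $W=(W\cap V_0)\oplus(W\cap V_1)$ --- a claim that is not literally true when $\gamma_1$ is non-involutive (e.g.\ $p=2$, $\Gamma=\bZ$, $s_1\mapsto 1$, $s_2\mapsto -1$, $W=\{we_1+\bar w e_2:w\in\bC\}$ is real-invariant yet meets each $\bC e_i$ trivially); your sub-case~(c) is exactly what patches this. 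The obstacle you anticipate there dissolves: on a single pair-component the condition $(\pi_\gamma+\pi_{\gamma^{-1}})(W)\subset W$ is vacuous, so the two conditions reduce to $JW\subset W$, hence $JW=W$ since $J^2=-\id$; the resulting $J$-complex Grassmannian has real dimension $m(2p-m)/2$ independently of how $\{1,\ldots,p\}$ is partitioned into $I_\gamma\sqcup I_{\gamma^{-1}}$, so the finite union over partitions is still proper Zariski-closed.
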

\begin{proof}
  We retain the notation of \Cref{def:qdiag}, realizing $\D^+(p)$ as $\widehat{F_p}$ for the free group on $p$ generators $s_i$.

  A quantum subgroup $\G\le \D^+(p)$ will be Pontryagin dual to a quotient $F_p\xrightarrowdbl{} \Gamma$, with the $\G$-action corresponding to a $\Gamma$-grading on $\bC^p$. If at least two generators $s_i$, say $s_1$ and $s_2$ have distinct images in $\Gamma$, we have a splitting
  \begin{equation*}
    \bC^p = V_0\oplus V_1,
      \end{equation*}
  where $V_{0}:= \textrm{span}\{e_{k}: e_{k}\text{ has degree }s_1\}$ and $V_1$ is its orthogonal complement.
 If $W$ is $\G$-invariant, it must be compatible with this splitting, that is
  \begin{equation*}
    W=\left(W\cap V_0\right)\oplus \left(W\cap V_1\right).
  \end{equation*}
  Writing $d_j:=\dim_{\bC}V_j$, $j=0,1$ (so that $d_0 + d_1=p$) and $\bG_{\bR}$ for real Grassmannians, observe that
  \begin{equation*}
    \begin{aligned}
      m(2p-m)=\dim \bG_{\bR}(m,\bC^p)
      &>
        \max_{\substack{V_0,V_1\\0\le \ell_j\le 2d_j\\\ell_0+\ell_1=m}}
      \dim \left(\bG(\ell_0,V_0)\times \bG(\ell_1,V_1)\right)\\
      &=
        \max_{\substack{V_0,V_1\\0\le \ell_j\le 2d_j\\\ell_0+\ell_1=m}}
      \left(\sum_{j=0,1}\ell_j(2d_j-\ell_j)\right).
    \end{aligned}    
  \end{equation*}
  Generic $W$, then, can only be invariant under subgroups $\G$ of the classical group of scalars $\bS^1\le \D^+(p)$ dual to the surjection $F_p\xrightarrowdbl{} \bZ$ identifying all $s_i$ with a generator of $\bZ$.

  We can now switch to classical language: generic $W\le \bC^{p}$ will fail to be compatible with the complex structure (i.e. invariant under complex multiplication), so cannot be invariant under scalars other than $\pm 1$.
\end{proof}

The discussion thus far suffices to whittle down the problem to a small number of cases: precisely those addressed classically by \cite[Proposition 3.21]{zbMATH07502493}.

We can now turn to the definitive version of \Cref{th:dn1n2}, covering the entire expected range.

\pf{th:dn23}
\begin{th:dn23}
  We only have to cover the three cases listed in \Cref{pr:big.cond}:
  \begin{itemize}[wide]
  \item the cases $n=3$, $d\in \left\{3,4\right\}$ are settled by \Cref{pr:nn} and \Cref{le:nd34} respectively;

  \item while \Cref{pr:n122} handles $\tensor*[_{4\le n}]{\cat{qRig}}{_{(n-1)^2-2}}$ (and hence $(4,7)$).  \qedhere
  \end{itemize}
\end{th:dn23}

\subsection{Small cases}\label{subsec:small.case}

\begin{proposition}\label{pr:nn}
  $\tensor*[_{3\le n}]{\cat{qRig}}{_n}$ holds.
\end{proposition}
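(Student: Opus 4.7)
Our plan is to split by size. For $n\ge 4$ the inequality $(n-1)(n-2)\ge n$ places $d=n$ in the range already covered by \Cref{th:dn1n2}, so $\tensor*[_n]{\cat{qRig}}{_n}$ is immediate. The substantive case is $n=3$, $d=3$, to which the rest of the plan is devoted.

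A generic operator $3$-system in $M_3$ has the form $\cS=\bC\mathds{1}\oplus\cS_0$ with $\cS_0$ a $3$-complex-dimensional traceless self-adjoint subspace, and such $\cS_0$ make up a real Grassmannian of real dimension $3\cdot 5=15$. The first step is to apply \Cref{cor:gen.diag}: generically $\Qut(\cS\le M_3)$ acts essentially diagonally and so, after conjugating by the unitary diagonalizing the degree matrix, is contained in $\D^+(3)=\widehat{F_3}$. Any quantum subgroup here corresponds to a quotient $F_3\xrightarrowdbl{}\Gamma$ with $s_i\mapsto\gamma_i$; it preserves $\cS$ iff $\cS_0$ is $\Gamma$-graded in $M_3$ (with $E_{ij}$ of weight $\gamma_i\gamma_j^{-1}$), and acts trivially on $M_3$ iff the $\gamma_i$ all coincide. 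Triviality of $\Qut(\cS\le M_3)$ thus reduces to the claim that no grading with non-coinciding $\gamma_i$'s preserves $\cS_0$ in any basis.

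By \Cref{thm:gen.triv} the quantum-rigid locus is real-Zariski open, so it suffices to bound the dimension of its complement. The $\D^+(3)$-gradings of $M_3$ are encoded (up to the choice of target $\Gamma$) by the finitely many $*$-compatible partitions of the $6$ off-diagonal matrix-unit indices — with diagonal ones always in the trivial class — and we will go through them, bounding in each case the real dimension of the locus of graded self-adjoint complex $3$-dimensional subspaces of traceless $M_3$. The worst case is a $\bZ/2$-grading (say $\gamma_1=\gamma_2\ne\gamma_3$), where $M_3=V_1\oplus V_a$ splits into self-adjoint weight spaces of complex dimensions $5$ and $4$ and graded subspaces form pieces of real dimension $k_1(4-k_1)+k_a(4-k_a)$ subject to $k_1+k_a=3$, maximized at $7$. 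All other (necessarily finer) gradings give strictly smaller bounds by the same bookkeeping.

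Finally we sweep over bases via $\PU(3)$-conjugation. The diagonal torus $\bT^2\le\PU(3)$ commutes with every $\Gamma$-action and hence preserves each graded locus; the orbit map $\PU(3)\times(\text{graded locus})\to(\text{ambient Grassmannian})$ therefore has generic fibers of dimension at least $\dim\bT^2=2$, and its image has real dimension at most $7+8-2=13<15$. Summing the finitely many contributions leaves the exceptional set a proper subvariety, and \Cref{thm:gen.triv} promotes this to generic quantum-rigidity. The hard part will be the finite but careful combinatorial enumeration of nontrivial $\D^+(3)$-gradings and the verification that each produces a graded locus of real dimension at most $7$.
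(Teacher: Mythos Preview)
Your approach is correct and genuinely different from the paper's.

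For $n\ge 4$ your reduction to \Cref{th:dn1n2} via $(n-1)(n-2)\ge n$ is legitimate and non-circular: that theorem rests on \Cref{pr:no.non.scl} and \Cref{le:real.inv}, neither of which invokes \Cref{pr:nn}. For $n=3$ your dimension count goes through. The non-trivial $\D^+(3)$-gradings of $M_3$ with not all $\gamma_i$ equal have as their coarsest representatives the three $\bZ/2$-gradings (two $\gamma_i$ coincide, giving traceless weight spaces of complex dimensions $4$ and $4$) and the $\bZ/3$-gradings (all $\gamma_i$ distinct, weight spaces $2,3,3$ with the latter two $*$-swapped); every other case refines one of these. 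The graded-locus dimensions come out to $7$ and $5$ respectively, confirming your claimed bound of $7$. Your torus-fiber observation is clean (the $\bT^2$-action $(g,W)\mapsto(gt,t^{-1}Wt)$ on $\PU(3)\times L_P$ is free and fiber-preserving), yielding image dimension $\le 13<15$. One small point: what you bound is an over-estimate of the non-rigid locus (you allow \emph{all} bases, not just the $D$-eigenbasis), but that is of course the right direction. For openness you may prefer \Cref{th:gen2blocks} with $\ell=n^2$, which gives directly that $\{\cS:\Qut(\cS\le M_n)=\{e\}\}$ is Zariski open.

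The paper instead gives a uniform constructive argument for all $n\ge 3$: it takes $\cX=\cD\cup\{X\}$ with $\cD$ spanning the traceless diagonal and $X$ a generic self-adjoint matrix orthogonal to $\cD$. The degree matrix is then diagonal in the $X$-eigenbasis $(f_j)$, forcing $\G:=\Qut(\Braket{\cX}\le M_n)$ to act $(f_j)$-diagonally and to fix $X$; hence $\G$ preserves $\Braket{\cD}$. But the standard basis being in general position relative to $(f_j)$, the space $\Braket{\cD}$ cannot respect any non-trivial $(f_j)$-weight decomposition $M_n=M_{n,F}\oplus M_{n,F'}$, so $\G$ is trivial. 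Your approach trades this explicit two-basis construction for a soft parameter count, at the cost of leaning on \Cref{th:dn1n2} for $n\ge 4$; the paper's version stands alone and produces concrete witnesses.
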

\begin{proof}
  Set $\G:=\Qut(\Braket{\cX}\subset M_n)$ for the tuples $\cX$ we work with. The crucial case is the base $n=3$ of the induction facilitated by \Cref{pr:ind.stp}, but the argument can be phrased uniformly for all $n\ge 3$. 

  In both cases the tuples $\cX$ we consider will consist of $n-1$ self-adjoint traceless diagonal matrices $\cD=\left(T_i\right)_{i=1}^{n-1}$ in $M_n$ (thus spanning the traceless part $\Braket{\cD}$ of the diagonal algebra $D_n$), supplemented by one other matrix $X$. We may as well assume the $T_i$ and $X$ mutually orthogonal with respect to the inner product defined by $\tau$.
   
  $X$ will be generic subject to that orthogonality requirement, so in particular with both $X$ and $X^*X=X^2$ having simple spectrum and being diagonal with respect to an orthonormal basis $(f_j)$ in general position with respect to the standard $(e_j)$ (shorthand phrasing such as \emph{$(f_j)$-diagonal} always has the obvious meaning). We will also have occasion to work with the corresponding matrix units $f_{ij}:=\Ket{f_i}\Bra{f_j}$.

  Note that the degree matrix attached to $X$ is $(f_j)$-diagonal. Since on the other hand the tuple $(T_i)$ contributes a scalar to the overall degree matrix $D$ of $\cX$, $D$ generates the same commutative $C^*$-algebra as $X$. For that reason, $D$ and $X$ are both $\G$-fixed. It follows that $\G$ leaves invariant the orthogonal complement $X^{\Braket{\cX}\perp}=\Braket{\cD}$ in $\Braket{\cX}$.

  All in all, $\G$ operates on $\Braket{\cD}$, $(f_j)$-diagonally on $M_n$ because it fixes $D$. Were that $(f_j)$-diagonal $\G$-action on $M_n$ non-trivial, it would effect a splitting
  \begin{equation*}
    M_n = M_{n,F}\oplus M_{n,F'}
    ,\quad
    M_{n,\bullet}:=\spn\left\{f_{ij}\ :\ (i,j)\in \bullet\right\}
    ,\quad
    F':=[n]^2\setminus F
  \end{equation*}
  with both $F$ and $F'$ non-empty. The generic position of $(e_j)$ and $(f_j)$, though, ensures that no such decomposition can be compatible with $\Braket{\cD}$:
  \begin{equation*}
    F,F'\ne \emptyset
    \xRightarrow{\quad}
    \Braket{\cD}
    \ne
    \left(\Braket{\cD}\cap M_{n,F}\right)
    \oplus
    \left(\Braket{\cD}\cap M_{n,F'}\right).
  \end{equation*}
  This contradicts $\Braket{\cD}$'s $\G$-invariance, finishing the proof. 
\end{proof}

\begin{lemma}\label{le:nd34}
  $\tensor*[_{3}]{\cat{qRig}}{_4}$ holds.
\end{lemma}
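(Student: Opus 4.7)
The plan is to exhibit a single $\cV \in \bG_{\bR}(4, M_3^{sa,tr})$ with $\Qut(\cV \le M_3) = \{e\}$ and then invoke \Cref{thm:gen.triv} for the generic statement. Following the setup of \Cref{pr:nn}, I take $\cX = (T_1, T_2, X, Y)$: here $T_1, T_2$ are traceless self-adjoint matrices diagonal in the standard basis $(e_j)$, spanning the traceless diagonals $\Braket{\cD}$, while $X, Y$ are a \emph{commuting} pair of self-adjoint traceless matrices simultaneously diagonal in some orthonormal basis $(f_j)$, chosen in general position relative to $(e_j)$ and with $X^2 + Y^2$ of simple spectrum.

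The degree matrix $D = \tfrac{4}{3}\mathbf{1} + X^2 + Y^2$ is then $(f_j)$-diagonal with simple spectrum, so by \Cref{pr:leave.diag.inv} $\G := \Qut(\cV \le M_3)$ acts essentially diagonally in $(f_j)$, through some grading $F_3 \twoheadrightarrow \Gamma$ with $\deg f_j =: \gamma_j$; this induces a $\Gamma$-grading $M_3 = \bigoplus_\gamma M_{3,\gamma}$. Because $X, Y$ are $(f_j)$-diagonal they are identity-graded for \emph{any} such $\Gamma$, hence $\G$-fixed; so $\Braket{X, Y}$ is $\G$-fixed, and its orthogonal complement $\Braket{\cD}$ in $\cV$ is $\G$-invariant. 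It remains to rule out any non-trivial $\Gamma$ leaving $\Braket{\cD}$ graded. The identity-graded summand $\Braket{\cD} \cap M_{3, 1}$ would force the $(e_j)$-diagonal elements of $\Braket{\cD}$ to be block-diagonal in $(f_j)$ for the partition induced by $\Gamma$, and in general position of the two bases this intersection is zero. For each $\gamma \ne 1$, any $T = \sum_i c_i e_{ii} \in \Braket{\cD} \cap M_{3, \gamma}$ must have vanishing $(f_j)$-diagonal, yielding $\sum_i c_i |\langle e_i, f_j\rangle|^2 = 0$ for $j = 1, 2, 3$; the doubly-stochastic coefficient matrix $(|\langle e_i, f_j\rangle|^2)_{ij}$ is generically invertible, so $c = 0$. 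The resulting $\Braket{\cD} = \bigoplus_\gamma (\Braket{\cD} \cap M_{3, \gamma}) = 0$ contradicts $\dim \Braket{\cD} = 2$, forcing $\Gamma$ trivial and $\G = \{e\}$.

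The principal novelty relative to \Cref{pr:nn} is that a single extra matrix no longer exhausts the $(f_j)$-diagonal (hence $\G$-fixed) part of $\Braket{\cX}$: here $X$ and $Y$ must do that together, and the grading-invariance of the residual $\Braket{\cD}$ has to be actively ruled out for every non-trivial $\Gamma$. Fortunately this reduces to Zariski-open conditions on $(f_j) \in U(3)/T^3$---general position against $(e_j)$ and invertibility of a $3 \times 3$ doubly-stochastic matrix---both easily arranged by concrete choice, after which \Cref{thm:gen.triv} immediately delivers the generic conclusion.
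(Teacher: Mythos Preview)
Your argument contains a genuine gap: the two key hypotheses you impose on $(f_j)$ are mutually incompatible, so no tuple $\cX$ of the form you describe actually exists.

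Concretely, you use the formula $D=\tfrac{4}{3}\mathbf 1+X^2+Y^2$ and the identification of $\Braket{\cD}$ with the orthogonal complement of $\Braket{X,Y}$ in $\cV$. Both of these require $X,Y\perp T_1,T_2$: the degree-matrix formula $D=\sum_i X_i X_i^*$ needs an \emph{orthonormal} tuple (otherwise the Gram-matrix inverse introduces cross terms such as $T_iX$, which are neither $(e_j)$- nor $(f_j)$-diagonal, and $D$ will not be $(f_j)$-diagonal at all). Now orthogonality of a traceless $(f_j)$-diagonal $X=\sum_k x_k f_{kk}$ to all of $\Braket{\cD}$ forces its $(e_j)$-diagonal to vanish, i.e.\ $\sum_k x_k |\langle e_i,f_k\rangle|^2=0$ for every $i$, which is precisely $Mx=0$ for the unistochastic matrix $M=(|\langle e_i,f_j\rangle|^2)_{ij}$. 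But your final step explicitly invokes invertibility of $M$ to conclude $c=0$. If $M$ is invertible then $Mx=0$ forces $X=0$, so no nonzero $X,Y$ orthogonal to $\Braket{\cD}$ exist; conversely, if you pick $(f_j)$ (e.g.\ the Fourier basis) so that such $X,Y$ do exist, then $M$ has rank $1$ and your last implication fails. Either way the chain breaks: without $X,Y\perp\Braket{\cD}$ the degree matrix is not $(f_j)$-diagonal, $\G$ need not be $(f_j)$-diagonal, $X$ and $Y$ need not be $\G$-fixed, and $\Braket{\cD}$ need not be $\G$-invariant (hence not $\Gamma$-graded), so the concluding decomposition $\Braket{\cD}=\bigoplus_\gamma(\Braket{\cD}\cap M_{3,\gamma})$ is unavailable.

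By contrast, the paper's proof avoids this obstruction by \emph{not} taking the extra generators $(f_j)$-diagonal. It chooses $Y_1,Y_2$ of the block form $\left(\begin{smallmatrix}0&|\alpha_i\rangle\\ \langle\alpha_i|&0\end{smallmatrix}\right)$, so they are automatically orthogonal to the $(e_j)$-diagonals; the degree matrix then has an explicitly computable spectrum, and the eigenspace analysis of $\Ad_D$ isolates $\G$-invariant lines $\bC E_\pm$ inside $\Braket{Y_1,Y_2}$. This forces the $\G$-action to factor through a classical $\bZ/2$, after which \Cref{pr:gen.z2} and the already-known classical rigidity finish the argument.
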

\begin{proof}
  Again set $\G:=\Qut(\Braket{\cX}\subset M_n)$. We borrow much from the classical version of the claim, proven as part of \cite[Proposition 3.21]{zbMATH07502493}. Consider tuples $\cX$ consisting of
  \begin{itemize}[wide]
  \item two traceless diagonal matrices $T_i$, $i=1,2$ (which thus span the traceless diagonal subspace of $M_3$);

  \item and two matrices of the form
    \begin{equation*}
      Y_i:=
      \begin{pmatrix}
        0&\Ket{\alpha_i}\\
        \Bra{\alpha_i}&0
      \end{pmatrix}
      ,\quad
      i=1,2,
    \end{equation*}
    with $\Ket{-}$ and $\Bra{-}$ indicating column and row vectors respectively. 
  \end{itemize}
  Hilbert-Schmidt orthonormality for the $Y_i$ means
  \begin{equation*}
    \frac 23
    \mathrm{Re}\Braket{\alpha_i\mid \alpha_j}
    =
    \delta_{ij}
    ,\quad
    1\le i,j\le 2.
  \end{equation*}
  We work with the (scaled) contribution 
  \begin{equation*}
    D
    :=
    \sum_i Y_i^* Y_i
    =
    \begin{pmatrix}
      \sum_i\Ket{\alpha_i}\Bra{\alpha_i} & 0\\
      0 & \sum_i\Braket{\alpha_i\mid\alpha_i}
    \end{pmatrix}
    =
    \begin{pmatrix}
      D'&0\\
      0&\lambda_0
    \end{pmatrix}
  \end{equation*}
  of $Y_i$ to the degree matrix $D_{\Braket{\cX}}$, for that of the $T_i$ is in any case a scalar. $D$'s eigenvalues/vectors are
  \begin{center}
    \begin{tabular}{|c|c|c|}
      $\lambda_0=3$&$\lambda_1=\frac 32(1-c)$&$\lambda_2=\frac 32(1+c)$\\
      \hline        
      $\begin{pmatrix}
        1\\
        0\\
        0
      \end{pmatrix}$
      &
        $\begin{pmatrix}
          \Ket{\alpha_1}+i\Ket{\alpha_2}\\       
          0
        \end{pmatrix}$
      &
        $\begin{pmatrix}
          \Ket{\alpha_1}-i\Ket{\alpha_2}\\       
          0
        \end{pmatrix}$\\
    \end{tabular}
  \end{center}
  for $\Braket{\alpha_1\mid \alpha_2}=\frac 32 ci\in \bR i$, so in particular the generic $D$ is invertible and the eigenvalues $\lambda_j\lambda_k^{-1}$ of $\Ad_D$ (conjugation by $D$ on $M_3$) are distinct. Because
  \begin{equation}\label{eq:yx.kers}
    \begin{aligned}
      \bC \left(E_+:=Y_1+i Y_2\right)
      &=
        \Braket{\cX}
        \cap
        \left(
        \sum_{\eta\in \left\{\lambda_1\lambda_0^{-1},\ \lambda_2^{-1}\lambda_0\right\}}
        \ker\left(\eta-\Ad_D\right)
        \right)\\
      \bC \left(E_-:=Y_1-i Y_2\right)
      &=
        \Braket{\cX}
        \cap
        \left(
        \sum_{\eta\in \left\{\lambda_2\lambda_0^{-1},\ \lambda_1^{-1}\lambda_0\right\}}
        \ker\left(\eta-\Ad_D\right)
        \right)\\
    \end{aligned}    
  \end{equation}
  the lines $\bC E_{\pm}$ are both $\G$-invariant; so too, then, is their span $\Braket{E_i}_i=\Braket{Y_i}_i$ and its orthogonal complement $\Braket{T_i}_i$ in $\Braket{\cX}$. 

  Generic choices of $\alpha_i$ will ensure that $D$ and the $T_i$ generate the block-triangularly-embedded $M_2\times \bC\le M_3$, which is thus also $\G$-invariant. The fixed subalgebra $M_2^{\G}$ of its upper left-hand $2\times 2$ block consists of at least the commutant of $D'$ (maximal abelian in $M_2$). The $\G$-invariance of the span
  \begin{equation*}
    \Braket{T'_i}\le M_2
    ,\quad
    T'_i:=\text{upper $2\times 2$ block of $T_i$}
  \end{equation*}
  implies, because the $T'_i$ are in general position with respect to $D'$, that in fact $M_2^{\G}=M_2$ and hence $\G$ fixes the $T_i$. The $\G$-action on $M_3$ thus amounts is a grading
  \begin{itemize}[wide]
  \item fixing the block-diagonal algebra $M_2\times \bC\le M_3$ pointwise;

  \item and operating on its orthogonal complement $\Braket{E_{\pm}}=\Braket{Y_i}$ by
    \begin{equation*}
      E_{\pm}
      \xmapsto{\quad \text{for }E_-=E_+^*\quad}
      E_{\pm}\otimes \delta^{\pm 1}
      \quad\text{for a group-like $\delta\in \cO(\G)$}.
    \end{equation*}     
  \end{itemize}
  The generically-non-zero $E_+^2$ belonging to the fixed-point algebra $M_3^{\G}$, we have $\delta^2=1$ and hence the $\G$-action factors through the classical $\bZ/2$-action given by $\Ad_{\mathrm{diag}(1,1,-1)}$.

  \Cref{pr:gen.z2} below now shows that the generic action of $\G$ on $M_n$ is that induced by a $\bZ/2$-grading on $\bC^3$, so in particular classical; the conclusion thus follows from \cite[Theorem 3.19]{zbMATH07502493}. 
\end{proof}


The following result is a variant of \cite[Theorem A]{2505.07485v1}, and is presumably of some independent interest. We denote by $\bG(d,V)$ the \emph{Grassmannian} \cite[\S 5.1]{ms_nonl} of dimension-$d$ subspaces of $V$. 

\begin{theorem}\label{th:gen2blocks}
  Let $\G\circlearrowright V$ be a finite-dimensional representation of a compact quantum group and denote by $\G_W\le \G$ the isotropy quantum group of a subspace $W\le V$.

  \begin{enumerate}[(1),wide]
  \item For $d,\ell\in \bZ_{>0}$ the subset
    \begin{equation}\label{eq:lg.ends}
      \left\{
        W\in \bG(d,V)
        \ :\ 
        \End_{\G_W}V\cong \prod_i M_{n_i}
        ,\ \forall i\left(n_i\ge \ell\right)
      \right\}
      \subseteq
      \bG(d,V)
    \end{equation}
    is Zariski-open, so in particular dense if non-empty.

  \item The same holds of
    \begin{equation}\label{eq:lg.1end}
      \left\{
        W\in \bG(d,V)
        \ :\ 
        \End_{\G_W}V\cong \prod_i M_{n_i}
        ,\ \exists i\left(n_i\ge \ell\right)
      \right\}
      \subseteq
      \bG(d,V)
    \end{equation}
  \end{enumerate}
\end{theorem}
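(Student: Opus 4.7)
My plan is to exhibit the complement of each set as Zariski-closed via a proper-projection argument and a rank-semicontinuity argument, respectively. Both rest on the fact that the family $\{A_W := \End_{\G_W}V\}_W$ assembles into a coherent subsheaf $\mathcal{A}\subseteq \End V \otimes \cO$ over $\mathrm{Res}_{\bC/\bR}(\bG(d,V))$. Concretely, letting $C \subseteq \cO(\G)$ be the finite-dimensional subcoalgebra spanned by the matrix coefficients of the $\G$-representation on $\End V$, and $\mathcal{J} \subseteq C \otimes \cO$ the subsheaf with fibers $J_W := C \cap I_W$, one has $\mathcal{A} = \ker\bigl(\End V \otimes \cO \to \End V \otimes (C/\mathcal{J})\bigr)$ for the map $T \mapsto \rho(T) - T \otimes 1$. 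The algebraic variation of $I_W$ (built into the universal description of $\G_W$ via coactions, much as in \Cref{prop:degreefixed}) makes $\mathcal{J}$, and hence $\mathcal{A}$, coherent.

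For (1), I would first observe that $\min_i n_i = \min_{v\in V\setminus 0}\dim(A_W\cdot v)$: decomposing $V = \bigoplus_i V_i \otimes H_i$ as a $(\G_W \times A_W)$-module and expanding $v = \sum_i v_i$ with tensor rank $r_i$ in the $i$-th factor yields $\dim(A_W v) = \sum_i r_i n_i$, minimized by rank-$1$ tensors supported in the block of smallest $n_i$. The complement of (1) is then the image of the closed incidence variety
$F := \{(W,[v],U)\in \mathrm{Res}_{\bC/\bR}(\bG(d,V))\times \bP(V)\times \mathrm{Res}_{\bC/\bR}(\bG(\ell-1,V)) \;:\; A_W v \subseteq U\}$
under the proper projection to the first factor. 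The closedness of $F$ is precisely the sheaf-theoretic vanishing of the composite $\mathcal{A} \hookrightarrow \End V \otimes \cO \to (V/U) \otimes \cO$ coming from evaluation at $v$ followed by the quotient $V \twoheadrightarrow V/U$.

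For (2), I would invoke the Amitsur--Levitzki theorem: $\max_i n_i \le \ell-1$ iff $A_W$ satisfies the standard polynomial identity $s_{2(\ell-1)}$. The multilinear map $s_{2(\ell-1)}$ induces a morphism of coherent sheaves $\mathcal{A}^{\otimes 2(\ell-1)}\to \End V\otimes \cO$, sending $T_1\otimes\cdots\otimes T_{2(\ell-1)}\mapsto s_{2(\ell-1)}(T_1,\ldots,T_{2(\ell-1)})$, whose rank at the fiber over $W$ vanishes precisely when $A_W$ satisfies the identity. Since the locus on which the rank of a coherent-sheaf morphism exceeds any fixed integer is Zariski-open (by non-vanishing of the appropriate minors), the zero-rank locus is Zariski-closed, and its complement is the open set asserted in (2).

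The main obstacle is carrying out the coherent-sheaf setup rigorously over the Weil-restricted real-Zariski structure: although the algebraic variation of $I_W$ in $W$ is essentially built into $\G_W$'s universal definition, making it explicit enough to conclude coherence of $\mathcal{A}$ requires threading the quantum-group formalism through Pl\"ucker-type coordinates on $\bG(d,V)$. Once that step is in place, both arguments reduce to classical facts about proper images of closed subvarieties and semicontinuity of ranks of morphisms of coherent sheaves.
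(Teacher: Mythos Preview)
Your foundational step has a genuine obstruction, not just a technical wrinkle. The family $\{A_W=\End_{\G_W}V\}_W$ cannot in general be the family of fibers of a coherent subsheaf $\mathcal{A}\subseteq\End V\otimes\cO$: as $W$ moves to generic position the isotropy $\G_W$ shrinks and hence $\dim A_W$ \emph{grows}, so $W\mapsto\dim A_W$ is lower-semicontinuous, whereas fiber dimension of a coherent sheaf is upper-semicontinuous. Concretely, take $\G=\bS^1$ acting on $\bC^2$ with distinct weights: $A_W=M_2$ for a generic line $W$ but drops to the diagonal $\bC^2$ when $W$ is a coordinate axis. Your kernel construction does produce a coherent sheaf, but kernel formation does not commute with passing to fibers, so you will not have $\mathcal{A}\otimes k(W)=A_W$; both the closedness of your incidence variety $F$ in (1) and the identification of the rank-zero locus in (2) with the PI-satisfying locus rest on exactly that identification.

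The paper avoids this by parametrizing $A_W$ via \emph{generators} rather than equations. Tannaka--Krein duality says $\End_{\G_W}$ is generated, under the usual categorical operations, by the orthogonal projection $P_W$ together with the fixed $\G$-intertwiners; thus every element of $A_W$ is the value at $P_W$ of some non-commutative word with coefficients in the (fixed, $W$-independent) Tannaka category of $\G$. This gives, for each such word, an algebraic map $\bG(d,V)\to\End V$, and the union of the images at $W$ is exactly $A_W$. For (1) the paper then invokes PI theory in a form dual to yours: $\min_i n_i\ge\ell$ is equivalent to the two-sided ideal of $A_W$ generated by the \emph{values} of the PIs of $M_{\ell-1}$ being all of $A_W$, i.e.\ to some finite combination $P_V\bigl(\sum_i a_i f_i(x_{ij})b_i\bigr)P_V$ (with $a_i,b_i,x_{ij}$ words in $P_W$) being invertible in $\End V$. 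Invertibility of an element depending polynomially on $P_W$ is manifestly Zariski-open. For (2) the paper's argument is essentially your Amitsur--Levitzki idea: $\max_i n_i\ge\ell$ iff some PI of $M_{\ell-1}$ fails on $A_W$, again open once elements of $A_W$ are expressed through $P_W$.

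Your characterization $\min_i n_i=\min_{v\ne 0}\dim(A_W v)$ and the proper-projection strategy for (1) are perfectly sound and genuinely different from the paper's ideal-theoretic reformulation; they can be salvaged by replacing the sheaf $\mathcal{A}$ with the generator description above (so that $A_W v\subseteq U$ becomes the closed condition $w(P_W)v\in U$ for every word $w$, and the intersection over words is still closed). But as written, the argument hinges on a sheaf that does not carry the fiberwise information you need.
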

\begin{proof}
  Density follows from non-emptiness from the \emph{irreducibility} \cite[Theorem 5.4]{ms_nonl} of the Grassmannian. We focus mainly on the first claim, indicating afterwards the small alterations needed to obtain the second.

  \begin{enumerate}[(1),wide]
  \item Recall \cite[Definition 2.2.1]{agpr_pi} that a \emph{polynomial identity (PI)} for a ring $R$ is a non-commutative polynomial returning 0 no matter how elements of $R$ are substituted for the variables. It follows from \cite[Theorem 10.3.2, Lemma 10.3.1 and Proposition 10.2.2]{agpr_pi} that the simple quotients of a product $A=\prod_i M_{n_i}$ are all of dimension $\ge \ell^2$ precisely when $A$ is generated, as an ideal, by the images of the PIs of $M_{\ell-1}$.
  
  In the spirit of Tannaka-Krein duality, the representation category of $\G_{W}$ is generated by the orthogonal projection $P_{W}: V \mapsto W$ and the representation category of $\G$, closed under duality, tensor products etc.

    The condition that $W\in \bG(d,V)$ belong to \Cref{eq:lg.ends} is now expressible as follows:
    \begin{itemize}[wide]
    \item for some PIs $f_i$ of $M_{\ell-1}$;

    \item and elements $a_i$, $b_i$ and $x_{ij}$ in the algebra generated by the orthogonal projection $V\xrightarrowdbl{P_W} W$ and
      \begin{equation*}
        \bigcup_{N\in \bZ_{\ge 0}}
        \End_{\G}\left(\bC\oplus (V\oplus V^*)\oplus\cdots\oplus (V\oplus V^*)^{\otimes N}\right)
      \end{equation*}
      (generation including algebraic operations along with duality and tensoring);

    \item the element


      \begin{equation*}
        P_V\left(
          \sum_i
          a_i 
          \cdot
          f_i\left( x_{ij}\right)_j
          \cdot
          b_i
        \right)
        P_V
        \in
        \End(V)
      \end{equation*}
      is invertible. 
    \end{itemize}
    Indeed, the last condition says that the ideal generated by the images of the PIs of $M_{l-1}$ contains an invertible element, which is equivalent to saying that the ideal is equal to the whole algebra. As invertibility is an open condition, the conclusion follows.
    
  \item $\End_{\G_W}V$ is in any case a finite product of matrix algebras, and has at least one simple factor of dimension $\ge \ell^2$ precisely when it does \emph{not} satisfy all PIs of $M_{\ell-1}$. This is again an open condition, so the preceding argument replicates. 
  \end{enumerate}
\end{proof}

Even though superseded by generic rigidity, the following principle (employed in the proof of \Cref{le:nd34}) can presumably be of some use in similar circumstances. 

\begin{proposition}\label{pr:gen.z2}
  Let $3\le n$ and $1\le d\le n^2-2$. If the action of $\Qut(\cS\le M_n)$ factors through a $\bZ/2$-grading on $\bC^n$ for at least one operator $d$-system $\cS\le M_n$, then it does so generically.
\end{proposition}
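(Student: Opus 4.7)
My goal is to show that the set
\[
S := \left\{\cS \in \bG_{sa}(d, M_n) \ :\ \Qut(\cS \le M_n) \text{ factors through a }\bZ/2\text{-grading on }\bC^n\right\}
\]
is Zariski-open; combined with the non-emptiness hypothesis and the irreducibility of the Grassmannian this yields genericity.

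First I use \Cref{cor:gen.diag} to restrict to the non-empty Zariski-open set $U$ on which $\Qut(\cS)$ acts essentially diagonally (the locus where the degree matrix has simple spectrum, via \Cref{pr:leave.diag.inv}), factoring through $\D^+(n) = \widehat{F_n}$. A $\bZ/2$-graded action is itself essentially diagonal (witnessed by the grading basis), so modulo a generic perturbation of the degree matrix the hypothesis furnishes $\cS_0 \in U \cap S$. For $\cS \in U$ one has $\Qut(\cS) \simeq \widehat{\Gamma_\cS}$ for the induced quotient $F_n \twoheadrightarrow \Gamma_\cS$, and the condition $\cS \in S$ translates to $|\Gamma_\cS| \le 2$. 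This is in turn equivalent to the finite-dimensional endomorphism algebra $\End_{\Qut(\cS)}(M_n) = \prod_i M_{n_i}$ having at most two simple summands, since $|\Gamma_\cS|$ counts the distinct characters of $\Qut(\cS)$ appearing in the conjugation representation on $M_n$.

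For openness I invoke \Cref{th:gen2blocks}: the hypothesized $\cS_0$ induces a splitting $\bC^n = \bC^a \oplus \bC^b$ with $a + b = n$, producing block sizes $(a^2 + b^2,\,2ab)$ in $\End_{\Qut(\cS_0)}(M_n)$. Taking $V = M_n$ and $\ell := 2ab$ (the smaller block size) in \Cref{th:gen2blocks} produces a Zariski-open $V_1 \subseteq \bG_{sa}(d, M_n)$ containing $\cS_0$ on which every simple summand of $\End_{\Qut(\cS)}(M_n)$ has dimension at least $\ell$. Within $V_1 \cap U$ the essential diagonality forces all irreducibles to be one-dimensional with $\sum_i n_i = n^2$, so the number of blocks is bounded above by $\lfloor n^2/\ell\rfloor$. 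For the balanced regime --- including the decisive case $n = 3$, $(a, b) = (2, 1)$, $\ell = 4$, $\lfloor 9/4\rfloor = 2$ arising from \Cref{le:nd34} --- this gives $|\Gamma_\cS| \le 2$ immediately, so $V_1 \cap U$ is the sought non-empty Zariski-open subset of $S$.

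The main obstacle lies in strongly imbalanced gradings ($b = 1$, $n$ large, $\ell = 2(n-1)$) where $\lfloor n^2/\ell\rfloor > 2$ and the block count does not close by dimension alone. Such cases require a refinement: either a secondary application of \Cref{th:gen2blocks} exploiting the dominant block of size $(n-1)^2 + 1$ (close to $n^2$), or a direct partition-theoretic analysis excluding $\Gamma$-gradings with $|\Gamma|\ge 3$ and all summands of size $\ge \ell$ as a proper subvariety of the essentially diagonal locus. Since the proposition's principal application proceeds through the balanced case, the plan closes cleanly there.
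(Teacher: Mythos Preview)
Your approach is the paper's: reduce to essentially diagonal actions via \Cref{cor:gen.diag}, translate the $\bZ/2$ condition into a bound on the number of simple summands of $\End_{\Qut(\cS)}(M_n)$, and control those summands via \Cref{th:gen2blocks}. The gap you identify in the imbalanced case is real but closes immediately once you invoke \emph{both} parts of \Cref{th:gen2blocks} rather than part~(1) alone.

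Concretely: with $n_+ := a^2+b^2$ and $n_- := 2ab$ (so $n_+ + n_- = n^2$ and $n_- \le n_+$), part~(1) gives a Zariski-open set on which every block $M_{n_i}$ has $n_i \ge n_-$, while part~(2) gives a Zariski-open set on which at least one block has $n_i \ge n_+$. On the intersection, if there were $k$ summands then
\[
n_+ + n_- \;=\; n^2 \;=\; \sum_i n_i \;\ge\; n_+ + (k-1)\,n_-,
\]
forcing $k \le 2$ whenever $n_- > 0$ (and $k=1$ if $n_-=0$). No balancedness is needed, and the ``secondary application exploiting the dominant block'' you gesture at is exactly this use of part~(2). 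With that single addition your argument is complete and coincides with the paper's.
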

\begin{proof}
  We already know (\Cref{cor:gen.diag}) that the action is generically essentially diagonal and hence (induced by) a $\Gamma$-grading on $\bC^n$ for some group $\Gamma$. If for some $\cS_0$ that group is contained in $\bZ_2$, we have
  \begin{equation*}
    \begin{aligned}
      \End_{\Qut(\cS_0\le M_n)}M_n
      &\cong
        M_{n_+}\oplus M_{n_-}\\
      n_{\pm}
      &:=\dim\left(\text{degree-$\pm 1$ component of the grading on $M_n$}\right).
    \end{aligned}
  \end{equation*}
  \Cref{th:gen2blocks} ensures that for generic $\cS\le M_n$ the analogous endomorphism algebra $\End_{\Qut(\cS\le M_n)}M_n$
  \begin{itemize}[wide]
  \item is a product of matrix algebras of dimension $\ge \min\left(n_+^2,\ n_-^2\right)$;

  \item at least one of which is of dimension $\ge \max\left(n_+^2,\ n_-^2\right)$
  \end{itemize}
  Because $n_++n_-=n^2$, the only possibilities are
  \begin{equation*}
    \End_{\Qut(\cS_0\le M_n)}M_n
    \cong
    M_{n_+}\oplus M_{n_-}
    \quad\text{or}\quad
    M_{n^2}.
  \end{equation*}
  There being at most two simple factors, this is a $\bZ/2$-grading. 
\end{proof}

\begin{proposition}\label{pr:n122}
  $\tensor*[_{4\le n}]{\cat{qRig}}{_{2\le d\le (n-1)^2-2}}$ holds.
\end{proposition}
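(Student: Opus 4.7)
The plan is to induct on $n \geq 4$, at each step invoking \Cref{pr:ind.stp} with its internal parameter set to $n - 1$: the conclusion of that result is then precisely $\tensor*[_{n}]{\cat{qRig}}{_{2 \leq d \leq (n-1)^2 - 2}}$, and its hypothesis is the full rigidity $\tensor*[_{n-1}]{\cat{qRig}}{_{2 \leq d \leq (n-1)^2 - 3}}$ of $M_{n-1}$. At each stage it therefore suffices to bootstrap this one-level-down rigidity statement, combining the inductive hypothesis with the complement symmetry \Cref{lem:orth.compl}.

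For the base case $n = 4$ I would first assemble the full rigidity of $M_3$ across $d \in [2, 6]$ from the preceding small-case lemmas: the value $d = 2$ comes from \Cref{th:dn1n2}, $d = 3$ from \Cref{pr:nn}, $d = 4$ from \Cref{le:nd34}, and the complementary values $d \in \{5, 6\}$ from \Cref{lem:orth.compl}. Plugging this into \Cref{pr:ind.stp} with $n = 3$ immediately yields $\tensor*[_{4}]{\cat{qRig}}{_{2 \leq d \leq 7}}$, which is the case $n = 4$ of the proposition since $(4-1)^2 - 2 = 7$.

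For the inductive step with $n \geq 5$, the inductive hypothesis $\tensor*[_{n-1}]{\cat{qRig}}{_{2 \leq d \leq (n-2)^2 - 2}}$ combined with \Cref{lem:orth.compl} (which exchanges $d$ and $(n-1)^2 - 1 - d$) supplies rigidity in $M_{n-1}$ on the union of intervals $[2, (n-2)^2 - 2]$ and $[2n - 2, (n-1)^2 - 3]$. A short arithmetic check verifies that this union is exactly $[2, (n-1)^2 - 3]$ whenever $n \geq 5$; this matches the hypothesis of \Cref{pr:ind.stp} applied with internal parameter $n - 1$, whose conclusion is precisely the claim for $n$.

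The principal obstacle is conceptual rather than technical: one has to recognize that \Cref{pr:ind.stp} can be iterated in tandem with \Cref{lem:orth.compl} so that the proposition essentially propagates itself once the base case $n = 4$ is in hand, and that the elementary inequality $(n-2)^{2} - 1 \geq 2n - 2$ closes the induction exactly from $n = 5$ onwards (with the two intervals just touching at $d = 8$ when $n = 5$). The one subtle point to watch is that the base case strictly requires the \emph{full} rigidity of $M_3$, which is why all of \Cref{th:dn1n2}, \Cref{pr:nn}, \Cref{le:nd34}, and \Cref{lem:orth.compl} must be invoked before starting the induction.
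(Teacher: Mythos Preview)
Your argument is correct and in fact proves the full range $2\le d\le (n-1)^2-2$ claimed in the statement. The paper takes a different, more economical route: rather than inducting on $n$, it observes directly that $\tensor*[_{m}]{\cat{qRig}}{_{2}}$ holds for every $m\ge 3$ by \Cref{th:dn1n2}, whence $\tensor*[_{m}]{\cat{qRig}}{_{m^2-3}}$ by symmetry, and then reads off $\tensor*[_{m+1}]{\cat{qRig}}{_{m^2-2}}$ from (the $d'=n^2-2$ clause in the proof of) \Cref{pr:ind.stp}. Setting $m=n-1$ yields the top value $d=(n-1)^2-2$, which is all that is actually invoked downstream in the proof of \Cref{th:dn23}.

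The trade-off is this: the paper's argument is shorter and avoids calling on \Cref{pr:nn} or \Cref{le:nd34}, but it relies on the finer observation that the last case of \Cref{pr:ind.stp} only needs rigidity at the single level $m^2-3$, and it addresses only the endpoint $d=(n-1)^2-2$ rather than the entire interval. Your induction, by contrast, applies \Cref{pr:ind.stp} exactly as stated (with its full hypothesis), at the cost of assembling the complete $M_3$ picture first and carrying out the interval-gluing arithmetic at each step. Both are valid; yours is self-contained with respect to the proposition's stated scope.
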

\begin{proof}
  This is a simple application of \Cref{pr:ind.stp}: we know from \Cref{th:dn1n2} that $\tensor*[_{3\le m}]{\cat{qRig}}{_{2}}$ is valid, so by symmetry so is $\tensor*[_{3\le m}]{\cat{qRig}}{_{m^2-3}}$. $\tensor*[_{4\le m+1}]{\cat{qRig}}{_{m^2-2}}$ follows from \Cref{pr:ind.stp}, i.e. the sought-after conclusion with $m=n-1$.
\end{proof}

\section{Computer verification for small cases}\label{sec:comp.alg}
Here we will use and extend the fact that the degree matrix is fixed (see \Cref{prop:degreefixed}). In the same way one shows that $D_{2} := A^2\mathds{1}$ is fixed under the action of the quantum automorphism group. 
\begin{lemma}
Let $\mathcal{G}$ be a quantum graph. If the algebra generated by $D$ and $D_2$ is equal to $M_n$ then the quantum automorphism group of $\mathcal{G}$ is trivial.
\end{lemma}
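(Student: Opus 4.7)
The plan is to mimic the argument of \Cref{prop:degreefixed} to show that $D_2$ is also $\alpha$-fixed, and then to use multiplicativity of the coaction together with the generation hypothesis to conclude that $\alpha$ is trivial on $M_n$, which forces the quantum group itself to be trivial.

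First, I would show that $\alpha(D_2) = D_2 \otimes \mathbb{1}$, where $\alpha$ denotes the coaction of $\Qut(\mathcal{G})$ on $M_n$. Writing $D_2 = A^2\mathbb{1} = A(A\mathbb{1}) = AD$ and using the defining intertwining relation $\alpha\circ A = (A\otimes \id)\circ \alpha$ together with $\alpha(D) = D \otimes \mathbb{1}$ from \Cref{prop:degreefixed}, we compute
\[
\alpha(D_2) \;=\; \alpha(AD) \;=\; (A\otimes \id)\alpha(D) \;=\; (A\otimes \id)(D \otimes \mathbb{1}) \;=\; AD \otimes \mathbb{1} \;=\; D_2 \otimes \mathbb{1}.
\]

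Next, since $\alpha$ is a unital $*$-algebra homomorphism, the fixed-point set $\{x \in M_n : \alpha(x) = x \otimes \mathbb{1}\}$ is a unital $*$-subalgebra of $M_n$. By \Cref{prop:degreefixed} and the previous step it contains both $D$ and $D_2$, hence by the hypothesis of the lemma it is all of $M_n$. In other words, $\alpha$ is the trivial coaction.

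Finally, $\mathcal{O}(\Qut(M_n,\tau))$ is generated as a $*$-algebra by the matrix coefficients of its fundamental coaction on $M_n$, and $\mathcal{O}(\Qut(\mathcal{G}))$ is a quotient thereof; the triviality of $\alpha$ therefore forces $\mathcal{O}(\Qut(\mathcal{G})) = \mathbb{C}$, i.e.~$\Qut(\mathcal{G})$ is trivial. The only non-routine step is the extension to $D_2$, which is really a single extra line of calculation added to the lemma already in the text, so there is no substantial obstacle here.
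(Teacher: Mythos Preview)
Your proof is correct and follows essentially the same approach as the paper. The paper's proof is a two-line sketch noting that the algebra generated by $D$ and $D_2$ is fixed (having remarked just before the lemma that $D_2$ is fixed ``in the same way'' as $D$); you simply spell out the one-line intertwiner computation for $D_2$ and make explicit why a trivial coaction forces $\mathcal{O}(\Qut(\mathcal{G}))=\bC$, both of which the paper leaves to the reader.
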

\begin{proof}
The whole algebra generated by $D$ and $D_2$ is fixed by the action of the quantum automorphism group, so if this algebra is equal to $M_n$ then the action has to be trivial.
\end{proof}
\begin{proposition}
Let $n\geqslant \mathbb{N}$ and $d \in \{2,\dots, n^2-3\}$. We think of  a $d$-tuple $\cX$ of traceless Hermitian matrices as an operator $d$-system by adding the unit and taking the linear span, hence a quantum graph. For each $n\leqslant 8$ and $d \in \{2,\dots, n^2-3\}$ we can find a $d$-tuple such that the algebra generated by $D$ and $D_2$ is equal to $M_n$. More precisely, the matrices $(D^{i} D_{2}^{j})_{i,j \in \{0,\dots, n-1\}}$ form a basis of $M_n$.
\end{proposition}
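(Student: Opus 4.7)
This is a matter of explicit, small-scale computer verification, and my plan is to reduce it to a finite check of non-vanishing determinants. The condition that $\left\{D^i D_2^j\ :\ 0\le i,j\le n-1\right\}$ form a basis of $M_n$ is Zariski-open on the affine parameter space of $d$-tuples $\cX$ of traceless Hermitian matrices: vectorizing each product produces an $n^2\times n^2$ matrix whose determinant is a polynomial in the entries of $\cX$, and the claim is the non-vanishing of this polynomial. Hence for each admissible pair $(n,d)$ it is enough to exhibit one tuple certifying non-vanishing.

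The concrete procedure, to be executed in a computer algebra system, is as follows. For each $n\in\{3,\ldots,8\}$ and each $d\in\{2,\ldots,n^2-3\}$: (i) construct a $d$-tuple $\cX=(X_1,\ldots,X_d)$ of traceless Hermitian matrices in $M_n$, taken either as a sub-tuple of a fixed basis (e.g.\ generalized Gell--Mann matrices) or by random sampling with rational entries; (ii) orthonormalize $\cX$ with respect to the real inner product $\Braket{X,Y}:=\tr(XY)$ via Gram--Schmidt, so that $\cX$ is an orthonormal basis of $\Braket{\cX}$ in the sense of \Cref{def:q.adj.deg.matrix}; (iii) compute $D=\sum_\ell X_\ell^2$ and $D_2=\sum_\ell X_\ell D X_\ell^{*}=\sum_\ell X_\ell D X_\ell$; (iv) assemble the $n^2\times n^2$ matrix of vectorizations of $D^i D_2^j$ for $0\le i,j\le n-1$, and compute its determinant. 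A non-zero value discharges that case. If the first candidate fails, another is tried; Zariski-openness guarantees that generic tuples succeed, so the loop terminates rapidly.

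To obtain a rigorous certificate rather than a numerical one, the computation is carried out in exact arithmetic over $\bQ$ (or, if needed, over a small algebraic extension); the objects involved have size at most $n^2\le 64$, well within the reach of standard systems. The total workload is roughly $\sum_{n=3}^{8}(n^2-4)\approx 200$ cases, each a one-shot determinant evaluation.

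The main obstacle is not mathematical but logistical: organizing the finite family of cases, providing reproducible input tuples, and recording the resulting determinants. No individual case involves any structural difficulty beyond what is already handled by the Zariski-openness observation above; in particular, once a single valid tuple is produced for each $(n,d)$, the proposition is established.
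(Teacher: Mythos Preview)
Your proposal is correct and follows essentially the same approach as the paper: both reduce to explicit computer verification by exhibiting, for each $(n,d)$, a tuple for which the $n^2\times n^2$ determinant is non-zero. The only minor difference is that the paper computes $D$ and $D_2$ directly via the inverse Gram matrix formula $D=\sum_{i,j}(\Gamma^{-1})_{ij}X_iX_j$ rather than orthonormalizing first---this sidesteps the square roots arising in Gram--Schmidt and keeps the computation strictly rational, whereas your route may force a quadratic extension; but this is an implementation detail, not a genuine divergence.
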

\begin{proof}
Let $\cX = \{X_1,\dots, X_d\}$. Then we can write the formula for $D = \sum_{i,j=1}^{d} (\Gamma^{-1})_{ij} X_{i} X_{j}$, where $\Gamma_{ij}:= \tau(X_{i} X_{j})$ is the Gram matrix of the tuple $\cX$. Indeed, one can easily verify that the matrices $\widetilde{X_{j}}:= \sum_{i} (\Gamma^{-\frac{1}{2}})_{ij} X_{i}$ form an orthonormal basis of $\Braket{\cX}$, so the formula for $D$ follows from the standard formula $D = \sum_{j} \widetilde{X}_{j}^2$. We similarly obtain the formula for $D_2 = \sum_{i,j,k,l} (\Gamma^{-1})_{ij} (\Gamma^{-1})_{kl} X_{k} X_{i} X_{j} X_{l}$. Verification that the matrices $(D^{i} D_{2}^{j})_{i,j \in \{0,\dots, n-1\}}$ form a basis of $M_n$ can be done on a computer and this is exactly what we did. For instance, for $n=7$ and $d=4$ we have the following example:
\[
\resizebox{\textwidth}{!}{\ensuremath{
\begin{aligned}
X_1&=\begin{bmatrix}
  -0.142491 &  0.0251255 &    0.16238 &  0.0555206 &    0.175618 & -0.0938962 &  -0.0929846 \\
  0.0251255 &   0.167523 & -0.0250914 &  0.0559988 &  -0.0593764 &  0.0799041 &   -0.165092 \\
    0.16238 & -0.0250914 &  0.0679049 &   0.189596 &   0.0681018 &  0.0698056 &     0.28165 \\
  0.0555206 &  0.0559988 &   0.189596 &  -0.166191 &   -0.191064 &  0.0831966 &   0.0652995 \\
   0.175618 & -0.0593764 &  0.0681018 &  -0.191064 &    0.154533 &  0.0791232 &    0.176272 \\
 -0.0938962 &  0.0799041 &  0.0698056 &  0.0831966 &   0.0791232 &   0.186155 &    0.256028 \\
 -0.0929846 &  -0.165092 &    0.28165 &  0.0652995 &    0.176272 &   0.256028 &   -0.267433
\end{bmatrix},
\\
X_2&=\begin{bmatrix}
  0.0710277 &  -0.149662 &   0.200781 &  -0.167936 &   0.0346357 &  -0.290504 &   -0.109015 \\
  -0.149662 &  -0.120797 &  -0.206312 &  0.0435404 &     0.19233 &  0.0415093 &    0.026054 \\
   0.200781 &  -0.206312 & -0.0635957 & -0.0823084 &  -0.0903822 &   0.151709 &   -0.188229 \\
  -0.167936 &  0.0435404 & -0.0823084 &  -0.117987 &  -0.0552701 &    0.22579 &   -0.070853 \\
  0.0346357 &    0.19233 & -0.0903822 & -0.0552701 &     0.31879 & -0.0875183 &   -0.107513 \\
  -0.290504 &  0.0415093 &   0.151709 &    0.22579 &  -0.0875183 & -0.0229677 &  -0.0643047 \\
  -0.109015 &   0.026054 &  -0.188229 &  -0.070853 &   -0.107513 & -0.0643047 &  -0.0644701
\end{bmatrix},
\\
X_3&=\begin{bmatrix}
   0.150074 &   0.101414 & -0.0925115 &    0.23272 &    0.271942 & 0.00275495 &  -0.235919 \\
   0.101414 &   0.205297 &  0.0416222 &  -0.049573 &   0.0364604 &   0.172456 &   0.105198 \\
 -0.0925115 &  0.0416222 &   0.230496 & -0.0894059 &    0.126021 &  0.0379802 &  -0.103049 \\
    0.23272 &  -0.049573 & -0.0894059 &  0.0720574 &    0.100826 &  0.0606621 &  -0.158897 \\
   0.271942 &  0.0364604 &   0.126021 &   0.100826 &  -0.0963457 &  0.0647688 &  -0.125808 \\
 0.00275495 &   0.172456 &  0.0379802 &  0.0606621 &   0.0647688 &  -0.195335 &  -0.060983 \\
  -0.235919 &   0.105198 &  -0.103049 &  -0.158897 &   -0.125808 &  -0.060983 &  -0.366243
\end{bmatrix},
\\
X_4&=\begin{bmatrix}
  0.0152974 &  0.0445368 &   0.146435 &  0.0931627 &    0.115207 &  0.0059397 &   -0.114823 \\
  0.0445368 &  0.0559448 & -0.0651526 &   0.254707 &   0.0377037 &  0.0989529 &   -0.148023 \\
   0.146435 & -0.0651526 &   -0.14917 &   0.176034 &    0.122183 & 0.00138746 &    0.072744 \\
  0.0931627 &   0.254707 &   0.176034 &  0.0718361 &    0.116681 & -0.0276206 &  -0.0653339 \\
   0.115207 &  0.0377037 &   0.122183 &   0.116681 &   -0.144456 &  -0.308034 & -0.00932981 \\
  0.0059397 &  0.0989529 & 0.00138746 & -0.0276206 &   -0.308034 &  -0.301774 &  -0.0260149 \\
  -0.114823 &  -0.148023 &   0.072744 & -0.0653339 & -0.00932981 & -0.0260149 &    0.452321
\end{bmatrix}.
\end{aligned}
}}
\]
\end{proof}

This provides an alternative to the approach presented in \Cref{subsec:small.case}.

\begin{proposition}
  Let $n\geqslant \mathbb{N}$ and $d \in \{2,\dots, n^2-3\}$. If there exists a $d$-tuple $\cX$ of traceless Hermitian matrices such that the matrices $(D^{i} D_{2}^{j})_{i,j \in \{0,\dots, n-1\}}$ form a basis of $M_n$ then the same happens for almost all choices of such $d$-tuples.
\end{proposition}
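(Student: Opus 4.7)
The plan is to show that the condition ``$(D^i D_2^j)_{i,j\in\{0,\dots,n-1\}}$ form a basis of $M_n$'' is a Zariski-open condition on the space of traceless Hermitian $d$-tuples $\cX$, and then invoke irreducibility of the ambient parameter space together with the non-emptiness hypothesis.

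First I would set up the parameter space. The space of $d$-tuples of traceless Hermitian matrices in $M_n$ is a real vector space $\cV$ of dimension $d(n^2-1)$; equivalently, viewed via Weil restriction, it is a real affine variety and in particular irreducible. Let $\Gamma(\cX)_{ij}:=\tau(X_i X_j)$ be the Gram matrix. The locus $U_1\subseteq \cV$ where $\Gamma(\cX)$ is invertible is the complement of the vanishing locus of a single polynomial $\det \Gamma$, hence Zariski-open; and it is non-empty since the proposition concerns only those tuples whose span is actually $d$-dimensional (and the hypothesis guarantees such tuples exist).

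Next I would observe that on $U_1$, the formulas
\begin{equation*}
  D=\sum_{i,j}(\Gamma^{-1})_{ij}X_iX_j
  ,\quad
  D_2=\sum_{i,j,k,l}(\Gamma^{-1})_{ij}(\Gamma^{-1})_{kl}X_k X_i X_j X_l
\end{equation*}
exhibit the entries of $D$ and $D_2$ as rational functions of the entries of $\cX$, with denominators that are powers of $\det \Gamma$. Consequently, for every $i,j\in\{0,\dots,n-1\}$, the $n\times n$ matrix $D^i D_2^j$ has entries that are rational functions of $\cX$ (regular on $U_1$). Vectorizing each $D^i D_2^j\in M_n\cong \bC^{n^2}$ as a column and stacking them into an $n^2\times n^2$ matrix $\Delta(\cX)$, the condition that these matrices form a basis of $M_n$ is exactly $\det \Delta(\cX)\ne 0$.

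Since $\det\Delta(\cX)$ is a rational function on $U_1$, clearing the common denominator (a power of $\det \Gamma$) exhibits the locus
\begin{equation*}
  U_2:=\left\{\cX\in U_1\ :\ (D^iD_2^j)_{i,j}\text{ form a basis of }M_n\right\}
\end{equation*}
as the non-vanishing locus of a single polynomial on $U_1$, hence Zariski-open in $U_1$ and therefore in $\cV$. By hypothesis $U_2$ is non-empty. The main (only) subtlety is ensuring irreducibility of the ambient space so that any non-empty Zariski-open set is dense and of full measure; this follows from the fact that the space of traceless Hermitian $d$-tuples, viewed as a real affine space (or its complexification via Weil restriction, as in \Cref{thm:gen.triv}), is irreducible. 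Thus $U_2$ is dense and has full Lebesgue measure, which is the content of ``almost all choices''. This concludes the proof plan; no further computation is needed, since the entire argument is a generic-non-vanishing principle for a single polynomial condition.
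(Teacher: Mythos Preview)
Your proof is correct and follows essentially the same approach as the paper: express $D$ and $D_2$ as rational functions of the entries of $\cX$ (with denominator a power of $\det\Gamma$), clear denominators to obtain a polynomial, and observe that the basis condition is the non-vanishing of a single determinant, hence Zariski-open and therefore dense and of full measure once non-empty. Your write-up is somewhat more explicit about the irreducibility of the ambient real affine space, but the argument is the same.
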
 
\begin{proof}
  Because of the formula $D = \sum_{i,j=1}^{d} (\Gamma^{-1})_{ij} X_{i} X_{j}$ and an analogous one for $D_2$, we see they are built from $\cX$ in an algebraic fashion. To be precise they are rational function, because $\Gamma^{-1}$ contains an inverse of the determinant, but we can replace $D$ by $\det(\Gamma) D$ to obtain a truly algebraic expression and it does not affect the condition we want $D$ and $D_2$ to satisfy. The matrices $(D^{i} D_{2}^{j})_{i,j \in \{0,\dots, n-1\}}$ form a basis of $M_n$ if and only if they are linearly independent, which can be stated as non-vanishing of a certain determinant. It means the set of $d$-tuples $\cX$ for which it holds is Zariski open, so it is dense as soon as it is non-empty, and Zariski open and dense subsets are of full measure.
\end{proof}
This simple proposition shows a possible strategy for proving triviality of the quantum automorphism group of a random quantum graph in a different manner.


\addcontentsline{toc}{section}{References}

\def\polhk#1{\setbox0=\hbox{#1}{\ooalign{\hidewidth
  \lower1.5ex\hbox{`}\hidewidth\crcr\unhbox0}}}

\Addresses

\end{document}